\newtheorem{theorem}{Theorem}
\newtheorem{proposition}[theorem]{Proposition}
\newtheorem{observation}[theorem]{Observation}
\newtheorem{corollary}[theorem]{Corollary}
\newtheorem{problem}[theorem]{Problem}
\newtheorem*{problemnonumber}{Problem}
\newtheorem{claim}{Claim}[theorem]
\newcommand{\smallqed}{{\tiny $\left(\Box\right)$}}
\newcommand{\claimproof}{\noindent\emph{Proof of claim.} }
\def\N3{N_{3^+}}
\def\vphi{\varphi}
\begin{document}

\title{Smallest $C_{2\ell +1}$-critical graphs of odd-girth $2k+1$\thanks{This work is supported by the IFCAM project Applications of graph homomorphisms (MA/IFCAM/18/39) and by the ANR project HOSIGRA (ANR-17-CE40-0022). The second author was partially financed by the French government IDEX-ISITE initiative 16-IDEX-0001 (CAP 20-25). A shorter preliminary version appeared in the proccedings of CALDAM'20~\cite{caldam}.}}

\author{Laurent Beaudou\footnote{\noindent National Research University, Higher School of Economics, 3 Kochnovsky Proezd, Moscow, Russia. E-mail: lbeaudou@hse.ru}
\and Florent Foucaud\footnote{LIMOS, CNRS UMR 6158, Universit\'e Clermont Auvergne, Aubi\`ere, France. E-mail: florent.foucaud@uca.fr}~\footnote{Univ. Bordeaux, Bordeaux INP, CNRS, LaBRI, UMR5800, F-33400 Talence, France.}~\footnote{Univ. Orl\'eans, INSA Centre Val de Loire, LIFO EA 4022, F-45067 Orl\'eans, France.}
\and Reza Naserasr\footnote{\noindent Université de Paris, CNRS, IRIF, F-75006, Paris, France. E-mail: reza@irif.fr}}

\maketitle

\begin{abstract}
Given a graph $H$, a graph $G$ is called $H$-critical if $G$ does not admit a homomorphism to $H$, but any proper subgraph of $G$ does. Observe that $K_{k-1}$-critical graphs are the standard $k$-(colour)-critical graphs. We consider questions of extremal nature previously studied for $k$-critical graphs and generalize them to $H$-critical graphs. After complete graphs, the next natural case to consider for $H$ is that of the odd-cycles. Thus, given integers $\ell$ and $k$, $\ell\geq k$, we ask: what is the smallest order of a $C_{2\ell +1}$-critical graph of odd-girth at least $2k+1$? Denoting this value by $\eta(k,C_{2\ell+1})$, we show that $\eta(k,C_{2\ell+1})=4k$ for $1\leq\ell\leq k\leq\frac{3\ell+i-3}{2}$ ($2k=i\bmod 3$) and that $\eta(3,C_5)=15$. The latter means that a smallest graph of odd-girth~$7$ not admitting a homomorphism to the $5$-cycle is of order~$15$. Computational work shows that there are exactly eleven such graphs on $15$~vertices of which only two are $C_5$-critical.

\end{abstract}

\section{Introduction}

A $k$-critical graph is a graph which is $k$-chromatic, but any of its proper subgraphs is $(k-1)$-colourable.
Extremal questions on critical graphs are a rich source of research in graph theory. Many well-known results and conjectures deal with this subject, see for example~\cite{dirac,EG19,gallai1,gallai2,NRS10}. Typical questions are for example:

\begin{problemnonumber}%\label{prob:k-critical-SmallestVertices}
What is the smallest possible order of a $k$-critical graph having a certain property, such as low clique number, high girth or high odd-girth?
\end{problemnonumber}

For example, the existence of graphs of high girth and high chromatic number, proved by Erd\H{o}s~\cite{erdos}, is a starting point of this problem. This fact implies that each of the above questions has a finite answer. The specific question of the smallest $4$-critical graph without a triangle has received considerable attention: Gr\"otzsch built a graph on $11$ vertices which is triangle-free and not $3$-colourable. Harary~\cite{H69} showed that any such graph must have at least $11$~vertices, and Chv\'atal~\cite{C74} showed that the Gr\"otzsch graph is the only one on 11 vertices.

In the context of colouring, it is of interest to consider the \emph{odd-girth}, that is, the size of a smallest odd-cycle of a graph, rather than the girth, since every graph with no odd-cycle is $2$-colourable. Extending the example of the Gr\"otzsch graph, Mycielski~\cite{M55} introduced a construction, now known as the Mycielski contruction, to increase the chromatic number without increasing the clique number.  A generalization of this construction is used to build $4$-critical graphs of high odd-girth, more precisely the \emph{generalized Mycielski construction on $C_{2k+1}$}, denoted $M_{k}(C_{2k+1})$, is a graph of odd-girth $2k+1$. Several authors (starting with Payan~\cite{P92}) showed that $M_{k}(C_{2k+1})$ is $4$-chromatic for any $k\geq 2$ and in fact $4$-critical, thus providing an upper bound of $2k^2+k+1$ for the minimum order $\eta(k)$ of a $4$-critical graph of odd-girth at least $2k+1$. (We will later generalize the function $\eta$ using the notion of homomorphism.) We refer to~\cite{GJS04,NT95,T01,Y96} for several other proofs. Among these authors, Ngoc and Tuza~\cite{NT95} asked whether the upper bound $\eta(k)\leq 2k^2+k+1$ is essentially optimal. The first quadratic lower bound for $\eta(k)$ was established in \cite{N99} by considering a maximum induced bipartite subgraph. Improving on this approach, Jiang in \cite{J01} proved $\eta(k)\geq (k-1)^2+2$.  Recently, Esperet and Stehl\'ik~\cite{ES15} proved that $2k^2+k+1$ is the correct bound for the special case of graphs where any two odd-cycles have a common vertex.\footnote{In fact, Ngoc and Tuza~\cite{NT95} equivalently asked for the value of the smallest constant $c$ such that the odd-girth of any $4$-chromatic graph of order $n$ is at most $(c+o(1))\sqrt{n}$. In this language, the tentative value $\eta(k)=2k^2+k+1$ translates to a value of $\sqrt{2n-7/4} + 1/2$ for the odd-girth of a $4$-chromatic graph of order~$n$; this is also the formulation used in~\cite{ES15}. The result of Jiang~\cite{J01} is formulated as a strict upper bound of $2\sqrt{n}+3$ for the odd-girth of a $4$-chromatic graph of order~$n$.}

The current work is a first step towards generalizing these extremal questions for $k$-critical graphs to $H$-critical graphs, defined using the terminology of homomorphisms.  
A \emph{homomorphism} of a graph $G$ to a graph $H$ is a vertex-mapping that preserves adjacency, i.e., a mapping $\psi:V(G) \to V(H)$ such that if $x$ and $y$ are adjacent in $G$, then $\psi(x)$ and $\psi(y)$ are adjacent in $H$. If there exists a homomorphism of $G$ to $H$, we may write $G \to H$ and we may say that $G$ is $H$-colourable, or that $G$ maps to $H$. In the study of homomorphisms, it is usual to work with \emph{the core} of a graph, that is, a minimal subgraph which admits a homomorphism from the graph itself. It is not difficult to show that a core of any graph is unique up to isomorphism. A graph is said to be \emph{a core} if it admits no homomorphism to a proper subgraph. We refer to the book~\cite{HNbook} for a reference on these notions.

Homomorphisms generalize proper vertex-colourings. Indeed, a homomorphism of $G$ to $K_k$ is equivalent to a $k$-colouring of $G$. However, the extension of the notion of colour-criticality to a homomorphism-based one is not much studied. As defined by Catlin~\cite{C84}, for a graph $H$, (we may assume $H$ is a core), a graph $G$ is said to be \emph{$H$-critical} if $G$ does not have a homomorphism to $H$ but any proper subgraph of $G$ does. We thus have the following.

\begin{observation}\label{obs:k-critical-is-K_k-1-critical}
A graph $G$ is $k$-critical if and only if it is $K_{k-1}$-critical.
\end{observation}  

This gives a large number of interesting extremal questions. By Observation~\ref{obs:k-critical-is-K_k-1-critical}, these questions are well-studied when $H$ is a complete graph. The next most accessible family of graphs $H$ to be considered is the one of odd-cycles. For example, the extremal question of the minimum number of edges of an $H$-critical graph on $n$ vertices has been the subject of extensive studies for $H=K_n$, see~\cite{KY14} for results and references on this subject. Recently, extensions of this study to $C_5$-critical graphs~\cite{DP17} and to $C_7$-critical graphs~\cite{PS19} were performed. We note, furthermore, that the $(2k+1)$-colouring problem is captured by the $C_{2k+1}$-colouring problem through a basic graph operation. For further details and extension of the study to signed graphs we refer to \cite{NPW21}.

In this work, we are interested in the order of a smallest $C_{2\ell +1}$-critical graph having the homomorphism property that its odd-girth is at least a given value.\footnote{This is a homomorphism property because any graph $G$ has odd-girth at least $2k+1$ if and only if $C_{2k-1}$ does not map to $G$.} We generally denote by $\eta(k,H)$ the smallest order of a graph of odd-girth at least~$k$ that has no homomorphism to $H$, and we ask the following.

\begin{problem}\label{problem}
Given positive integers $k,\ell$, what is the smallest order $\eta(k,C_{2\ell+1})$ of a $C_{2\ell +1}$-critical graph of odd-girth at least $2k+1$?
\end{problem}

In this paper, we study Problem~\ref{problem} when $\ell\geq 2$. As we discuss in Section~\ref{sec:prelim}, it follows from a theorem of Gerards~\cite{G88} that $\eta(k,C_{2\ell+1})\geq 4k$ whenever $\ell\leq k$, and $\eta(k,C_{2k+1})=4k$. We prove (in Section~\ref{sec:4k}) that, surprisingly, $\eta(k,C_{2\ell+1})=4k$ whenever $\ell\leq k\leq\frac{3\ell+i-3}{2}$ (with $2k=i\bmod 3$). We then prove (in Section~\ref{sec:eta(3,2)}) that $\eta(3,C_5)=15$. We conclude with further research questions in the last section. Table~\ref{table} summarizes the known bounds for Problem~\ref{problem} and small values of $k$ and $\ell$.

\renewcommand{\arraystretch}{0.8}

\begin{table}[ht!]
\centering
\scalebox{0.8}{
    \begin{tabular}{c||c|c|c|c|c|c|c|c}
 & $k=1$ & $k=2$ & $k=3$ & $k=4$ & $k=5$ & $k=6$ & $k=7$ & $k=8$\\
\hline
&&&&&&&&\\[-2mm]
$\ell=1$ & $4$ & $11$ & $\textbf{15}$--$22$ & $\textbf{17}$--$37$ & $\textbf{20}$--$56$ & $27$--$79$ & $38$--$106$ & $51$--$137$ \\
& & \scriptsize{\cite{H69}} & \scriptsize{[Th.~\ref{thm:OG7-order14-mappingC5}]--\cite{P92}} & \scriptsize{[Co.~\ref{cor:eta(4,1)}]--\cite{P92}} & \scriptsize{[Co.~\ref{cor:eta(k,l)>=4k}]--\cite{P92}} & \scriptsize{\cite{J01}--\cite{P92}} & \scriptsize{\cite{J01}--\cite{P92}} & \scriptsize{\cite{J01}--\cite{P92}} \\[3mm]
$\ell=2$ & $3$ & $\textbf{8}$ & $\textbf{15}$ & $\textbf{17}$--$37$ & $\textbf{20}$--$56$ & $\textbf{24}$--$79$ & $\textbf{28}$--$106$ & $\textbf{32}$--$137$ \\
& & \scriptsize{[Co.~\ref{cor:subd-K4}]} & \scriptsize{[Th.~\ref{thm:OG7-order14-mappingC5}]} & \scriptsize{[Co.~\ref{cor:eta(4,1)}]--\cite{P92}} & \scriptsize{[Co.~\ref{cor:eta(k,l)>=4k}]--\cite{P92}} & \scriptsize{[Co.~\ref{cor:eta(k,l)>=4k}]--\cite{P92}} & \scriptsize{[Co.~\ref{cor:eta(k,l)>=4k}]--\cite{P92}} & \scriptsize{[Co.~\ref{cor:eta(k,l)>=4k}]--\cite{P92}} \\[3mm]
$\ell=3$ & $3$ & $5$ & $\textbf{12}$ & $\textbf{16}$ & $\textbf{20}$--$56$ & $\textbf{24}$--$79$ & $\textbf{28}$--$106$ & $\textbf{32}$--$137$ \\
& & & \scriptsize{[Co.~\ref{cor:subd-K4}]} & \scriptsize{[Th.~\ref{thm:columns}]} & \scriptsize{[Co.~\ref{cor:eta(k,l)>=4k}]--\cite{P92}}  & \scriptsize{[Co.~\ref{cor:eta(k,l)>=4k}]--\cite{P92}} & \scriptsize{[Co.~\ref{cor:eta(k,l)>=4k}]--\cite{P92}} & \scriptsize{[Co.~\ref{cor:eta(k,l)>=4k}]--\cite{P92}} \\[3mm]
$\ell=4$ & $3$ & $5$ & $7$ & $\textbf{16}$ & $\textbf{20}$ & $\textbf{24}$--$79$ & $\textbf{28}$--$106$ & $\textbf{32}$--$137$ \\
& & & & \scriptsize{[Co.~\ref{cor:subd-K4}]} & \scriptsize{[Th.~\ref{thm:columns}]} & \scriptsize{[Co.~\ref{cor:eta(k,l)>=4k}]--\cite{P92}} & \scriptsize{[Co.~\ref{cor:eta(k,l)>=4k}]--\cite{P92}} & \scriptsize{[Co.~\ref{cor:eta(k,l)>=4k}]--\cite{P92}} \\[3mm]
$\ell=5$ & $3$ & $5$ & $7$ & $9$ & $\textbf{20}$ & $\textbf{24}$ & $\textbf{28}$ & $\textbf{32}$--$137$ \\
& & & & & \scriptsize{[Co.~\ref{cor:subd-K4}]} & \scriptsize{[Th.~\ref{thm:columns}]} & \scriptsize{[Th.~\ref{thm:columns}]} & \scriptsize{[Co.~\ref{cor:eta(k,l)>=4k}]--\cite{P92}}\\[3mm]
$\ell=6$ & $3$ & $5$ & $7$ & $9$ & $11$ & $\textbf{24}$ & $\textbf{28}$ & $\textbf{32}$ \\
& & & & & & \scriptsize{[Co.~\ref{cor:subd-K4}]} & \scriptsize{[Th.~\ref{thm:columns}]} & \scriptsize{[Th.~\ref{thm:columns}]} \\[3mm]
$\ell=7$ & $3$ & $5$ & $7$ & $9$ & $11$ & $13$ & $\textbf{28}$ & $\textbf{32}$\\
& & & & & & & \scriptsize{[Co.~\ref{cor:subd-K4}]} & \scriptsize{[Th.~\ref{thm:columns}]} \\[3mm]
$\ell=8$ & $3$ & $5$ & $7$ & $9$ & $11$ & $13$ & $15$ & $\textbf{32}$\\
& & & & & & & & \scriptsize{[Co.~\ref{cor:subd-K4}]}
    \end{tabular}
}
\caption{Known values and bounds on the smallest order $\eta(k,C_{2\ell+1})$ of a $C_{2\ell +1}$-critical graph of odd-girth at least $2k+1$. Bold values are proved in this paper.}% Blue values are conjectured to be optimal.}
\label{table}
\end{table}

Note that the value of $\eta(3,C_5)$ indeed was the initial motivation of this work. In~\cite{BoundingSPG}, we use the fact that $\eta(3,C_5)=15$ to prove that if a graph $B$ of odd-girth~$7$ has the property that any series-parallel graph of odd-girth~$7$ admits a homomorphism to $B$, then $B$ has at least~$15$ vertices. The 15 vertices solution then strengthen at the same time results on circular and fractional coloring of this family of graphs. One may expect similar properties for other extremal solutions.

\section{Preliminaries}\label{sec:prelim}

This section is devoted to the introduction of useful preliminary notions and results.

\paragraph{Circular chromatic number.}

We recall some basic notions related to circular colourings. For a survey on the matter, consult~\cite{Z01survey}.
Given two integers $p$ and $q$ with $gcd(p,q)=1$, the \emph{circular clique} $C(p,q)$ is the graph 
on vertex set $\{0,\ldots, p-1\}$ with $i$ adjacent to $j$ if and only if $q\leq |i-j|\leq p-q$.

A homomorphism of a graph $G$ to $C(p,q)$ is called a $(p,q)$-colouring, and the \emph{circular chromatic number}  of $G$, denoted $\chi_c(G)$, is the smallest rational $\dfrac{p}{q}$ such that $G$ has a $(p,q)$-colouring. Since $C(p,1)$ is the complete graph $K_{p}$, we have $\chi_c(G)\leq \chi(G)$.
On the other hand $C(2\ell+1,\ell)$ is the cycle $C_{2\ell +1}$. Thus $C_{2\ell +1}$-colourability is about deciding whether $\chi_c(G)\leq 2+\dfrac{1}{\ell}$.

It is a well-known fact that $C(p,q)\to C(r,s)$ if and only if $\dfrac{p}{q} \leq \dfrac{r}{s}$ (e.g. see \cite{Z01survey}), in particular we will use the fact that $C(12,5)$ has a homomorphism to $C(5,2)$, that is, $C_5$.

\paragraph{Odd-$K_4$'s and a theorem of Gerards.}
The following notion will be central in our proofs. An \emph{odd-$K_4$} is a subdivision of the complete graph $K_4$ where each of the four triangles of $K_4$ has become an odd-cycle~\cite{G88}. Furthermore, we call it a \emph{$(2k+1)$-odd-$K_4$} if each such cycle has length exactly $2k+1$.
Since subdivided triangles are the only odd-cycles of an odd-$K_4$, the odd-girth of a $(2k+1)$-odd-$K_4$ is~$2k+1$. The following is an easy fact about odd-$K_4$'s whose proof we leave as an exercise.

\begin{proposition}\label{prop:oddK4}
Let $K$ be an odd-$K_4$ of odd-girth at least~$2k+1$. Then, $K$ has order at least $4k$, with equality if and only if $K$ is a $(2k+1)$-odd-$K_4$. 
Furthermore, in the last case any two disjoint edges of $K_4$ are subdivided the same number of times when constructing $K$.
\end{proposition}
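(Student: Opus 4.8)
The plan is to parametrise the subdivision by its path lengths and turn the whole statement into elementary arithmetic on four linear constraints, one per triangle of $K_4$. First I would fix the four branch vertices of $K_4$ as $1,2,3,4$ and, for each edge $ij$, write $a_{ij}\ge 1$ for the length (number of edges) of the path replacing $ij$ in $K$. Since that path contributes $a_{ij}-1$ internal vertices, the order of $K$ is
\[
|V(K)| = 4 + \sum_{ij}(a_{ij}-1) = \Bigl(\sum_{ij} a_{ij}\Bigr) - 2,
\]
where the sum ranges over the six edges of $K_4$. By the definition of an odd-$K_4$, each of the four subdivided triangles is an odd cycle, and its length is exactly the sum of the three path lengths on its edges; since the odd-girth of $K$ is at least $2k+1$, each of these four triangle-sums is odd and at least $2k+1$.

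For the lower bound I would simply add the four inequalities ``triangle-sum $\ge 2k+1$''. The one point to check is the incidence identity that every edge of $K_4$ lies in exactly two of its four triangles, so that the four left-hand sides together count each $a_{ij}$ twice; this yields $2\sum_{ij} a_{ij}\ge 4(2k+1)$, hence $\sum_{ij} a_{ij}\ge 4k+2$ and therefore $|V(K)|\ge 4k$, as required.

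The equality case then reads off directly: $|V(K)|=4k$ forces $\sum_{ij}a_{ij}=4k+2$, i.e.\ equality in the summed inequality; as the four triangle-sums are each at least $2k+1$ and total $8k+4$, every one of them must equal $2k+1$, which is exactly the assertion that $K$ is a $(2k+1)$-odd-$K_4$ (the converse being immediate). For the final claim I would use the four equalities ``triangle-sum $=2k+1$''. Fixing a disjoint pair, say $\{12,34\}$, subtracting the equation of triangle $\{1,3,4\}$ from that of $\{1,2,4\}$ gives $a_{12}-a_{34}=a_{13}-a_{24}$, while subtracting the equation of triangle $\{1,2,3\}$ from that of $\{2,3,4\}$ gives $a_{12}-a_{34}=a_{24}-a_{13}$; adding these two relations yields $2(a_{12}-a_{34})=0$, so $a_{12}=a_{34}$ and likewise $a_{13}=a_{24}$. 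By the symmetry of $K_4$ the same argument applies to each of the three disjoint pairs, so opposite edges always receive equal path lengths, hence equal numbers of subdivisions.

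The argument is short, so there is no real obstacle beyond bookkeeping: the only two places demanding care are the double-counting identity that produces the crucial factor of $2$ in the lower bound, and checking that the two subtractions above are set up so that their sum cancels the $a_{13},a_{24}$ terms. I would also confirm at the outset the (stated) fact that the subdivided triangles are the only odd cycles, although for the bound itself one needs only that they are odd cycles of length at least $2k+1$, which is immediate from the definition of an odd-$K_4$ together with the odd-girth hypothesis.
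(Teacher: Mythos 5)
Your proposal is correct and takes essentially the same approach as the paper: both arguments rest on the double-counting identity that each edge lies in exactly two of the four odd cycles (giving $\sum_{ij} a_{ij} = |E(K)| \geq 4k+2$ combined with $|V(K)| = |E(K)| - 2$), and both settle the equality case and the opposite-edge claim by the same elementary linear arithmetic on the four triangle-length equations. The only difference is presentational (you parametrise by path lengths $a_{ij}$, while the paper counts degrees to get $|V(K)|=|E(K)|-2$), so there is nothing to add.
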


A $(2k+1)$-odd-$K_4$ is, more precisely, referred to as an \emph{$(a,b,c)$-odd-$K_4$} if three edges of a triangle of $K_4$ are subdivided into paths of length $a$, $b$ and $c$ respectively (by Proposition~\ref{prop:oddK4} this is true for all four triangles). Note that while the terms ``odd-$K_4$'' or ``$(2k+1)$-odd-$K_4$'' refer to many non-isomorphic graphs, an $(a,b,c)$-odd-$K_4$ ($a+b+c=2k+1$) is unique up to a relabeling of vertices. See~\cite[Section 4.2.1]{NW21} for some homomomorphism properties of odd-$K_4$'s.

An \emph{odd-$K_3^2$} is a graph obtained from three disjoint odd-cycles and three disjoint paths (possibly of length~$0$) joining each pair of cycles~\cite{G88}. Thus, in such a graph, any two of the three cycles have at most one vertex in common (if the path joining them has length~$0$). Hence, an odd-$K_3^2$ of odd-girth at least~$2k+1$ has order at least~$6k$.

In a graph $G$, a \emph{thread} is a path in $G$ where the internal vertices have degree~$2$ in $G$. For example, in an odd-$K_4$ or in an odd-$K_3^2$, the paths of degree~$2$-vertices joining two vertices of degree at least~$3$ are threads.

\begin{theorem}[Gerards \cite{G88}]\label{thm:gerards}
If $G$ has neither an odd-$K_4$ nor an odd-$K_3^2$ as a subgraph, then it admits a homomorphism to its shortest odd-cycle. 
\end{theorem}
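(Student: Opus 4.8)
The plan is to translate $C_{2k+1}$-colourability into the existence of a suitably balanced orientation of $G$ and then to argue that the two forbidden subgraphs are precisely the obstructions to such an orientation. First I would invoke Minty's orientation characterisation of the circular chromatic number: $G\to C_{2k+1}$, i.e. $\chi_c(G)\le 2+\frac1k$, holds if and only if $G$ admits an acyclic orientation in which every cycle $C$ satisfies
\[
k\cdot\max(|C^+|,|C^-|)\ \le\ (k+1)\cdot\min(|C^+|,|C^-|),
\]
where $C^+$ and $C^-$ denote the forward and backward arcs of $C$. A one-line manipulation rewrites this as $\bigl||C^+|-|C^-|\bigr|\le |C|/(2k+1)$ for every cycle $C$, i.e. each cycle must be balanced to within a $1/(2k+1)$ fraction of its length. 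The role of the odd-girth hypothesis is then transparent: an odd cycle has arc-imbalance at least $1$, so it can meet its budget only when its length is at least $2k+1$ — exactly the odd-girth bound — while a shortest odd cycle (of length $2k+1$) is forced to be oriented \emph{most-balanced}, with $|C^+|$ and $|C^-|$ equal to $k$ and $k+1$. Thus the theorem reduces to producing a single acyclic orientation that balances all cycles within budget simultaneously, and the only genuine difficulty is this simultaneity.

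Next I would run an induction on a minimal counterexample $G$ and carry out the standard reductions. If $G$ is disconnected or has a cut-vertex, each block is again free of odd-$K_4$ and odd-$K_3^2$; a bipartite block maps to an edge of $C_{2k+1}$, and a block of odd-girth at least $2k+1$ maps, by minimality, to its shortest odd cycle $C_{2m+1}$ (with $m\ge k$) and hence to $C_{2k+1}$. Since $C_{2k+1}$ is vertex-transitive, these block-homomorphisms can be rotated by automorphisms to agree on the shared cut-vertices and glued, so $G$ may be taken $2$-connected. Vertices of degree at most $1$ are deleted and their colour extended afterwards, and a maximal run of degree-$2$ vertices merely transmits a displacement between its two branch-endpoints; as such a subdivided edge has enough length to realise the displacement demanded by the rest of the graph, these runs may be suppressed, so I may assume $G$ is $2$-connected and, after suppression, of minimum degree at least $3$.

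The heart of the argument — and the step I expect to be the main obstacle — is to show that for the remaining $2$-connected graph the absence of odd-$K_4$ and odd-$K_3^2$ forces the odd cycles of $G$ into an arrangement loose enough to be balanced all at once. The two forbidden graphs are themselves obstructions: one checks directly that a $(2k+1)$-odd-$K_4$ carries four odd $(2k+1)$-cycles that cannot be made most-balanced together (a parity count around the $K_4$ fails), and an odd-$K_3^2$ fails for an analogous reason; the content of the theorem is that these are the \emph{only} minimal obstructions. I would prove this by a Menger/augmenting-path analysis of how two odd cycles interact: if a consistent most-balanced orientation of two odd cycles is impossible, then either they overlap or are joined by enough internally disjoint paths to exhibit a subdivided $K_4$ all of whose triangles are odd (an odd-$K_4$), while three odd cycles that are pairwise joined by disjoint paths yield an odd-$K_3^2$. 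Ruling these out forces any two odd cycles to meet in at most one vertex and the odd cycles to sit in a tree-like (cactus-like) pattern across $2$-separations — exactly the situation in which the per-cycle most-balanced orientations (available because every odd cycle has length at least $2k+1$) can be combined without conflict, completing the induction. Verifying that no other obstruction can arise, i.e. that these two subgraphs really exhaust the barriers to a globally balanced orientation, is the delicate point and is where the connectivity analysis does the real work.
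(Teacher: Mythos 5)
The first thing to note is that the paper offers no proof of Theorem~\ref{thm:gerards} at all: it is quoted from Gerards~\cite{G88}. So your attempt can only be judged as a standalone proof, and it is not one. Your opening reductions are mostly sound: the Minty/Goddyn--Tarsi--Zhang orientation characterisation is correctly stated (acyclicity even comes for free, since a directed cycle would have $\min(|C^+|,|C^-|)=0$ and violate the inequality), the rewriting as $\bigl||C^+|-|C^-|\bigr|\le |C|/(2k+1)$ is right, and the block decomposition with rotation by automorphisms of $C_{2k+1}$ is standard. (The thread-suppression step is shakier than you admit: a thread of length $\ell$ realises in $C_{2k+1}$ only displacements of suitable size \emph{and parity} relative to $\ell$, so ``enough length to realise the displacement'' is a condition to be verified, not a triviality.) The decisive defect is your third paragraph: the entire content of Gerards' theorem is precisely the assertion that, absent the two subgraphs, all cycles can be balanced simultaneously, and you never prove it --- you name a technique (``Menger/augmenting-path analysis'') and then concede that verifying it ``is the delicate point''. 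That placeholder \emph{is} the theorem; everything before it is routine.

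Worse, the structural picture the placeholder is supposed to deliver is false, so no elaboration along these lines can succeed. You claim that excluding odd-$K_4$ and odd-$K_3^2$ ``forces any two odd cycles to meet in at most one vertex,'' yielding a cactus-like arrangement. Consider the theta graph formed by two vertices joined by three internally disjoint paths of lengths $3$, $4$ and $4$: it has only two vertices of degree~$3$, so it contains no subdivision of $K_4$ whatsoever; its only cycles have lengths $7$, $7$ and $8$, so it contains no odd-$K_3^2$ either; yet its two odd $7$-cycles share a path on four vertices. Graphs satisfying the hypothesis of Theorem~\ref{thm:gerards} can therefore have odd cycles overlapping in long paths, the tree-like pattern is simply not available, and handling exactly these overlaps (and $2$-separations generally) is where Gerards' structural induction does its real work. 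A smaller conceptual slip points the same way: the two subgraphs are not ``precisely the obstructions,'' since the theorem is one-directional --- for instance, the disjoint union of a triangle with a $(1,3,3)$-odd-$K_4$ contains an odd-$K_4$ yet maps to its shortest odd cycle $K_3$, because a $(1,3,3)$-odd-$K_4$ is planar of girth~$7$ and hence $3$-colourable by Gr\"otzsch's theorem.
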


We have the following corollary of Proposition~\ref{prop:oddK4} and Theorem~\ref{thm:gerards}. 

\begin{corollary}\label{cor:subd-K4}
For any positive integer $k$, we have $\eta(k,C_{2k+1})=4k$.
\end{corollary}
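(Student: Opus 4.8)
The goal is to establish $\eta(k,k)=4k$, i.e., that the smallest graph of odd-girth $2k+1$ admitting no homomorphism to $C_{2k+1}$ has exactly $4k$ vertices. The plan is to prove the lower bound and the upper bound separately, using the tools just developed.

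For the lower bound $\eta(k,k)\geq 4k$, I would argue by contraposition using Theorem~\ref{thm:gerards}. Let $G$ be any graph of odd-girth $2k+1$ on fewer than $4k$ vertices; I claim $G\to C_{2k+1}$. By Theorem~\ref{thm:gerards}, it suffices to show that $G$ contains neither an odd-$K_4$ nor an odd-$K_3^2$ as a subgraph, since its shortest odd-cycle then has length $2k+1$ and $G$ maps to it. Both forbidden configurations are ruled out on order grounds: by Proposition~\ref{prop:oddK4}, any odd-$K_4$ of odd-girth at least $2k+1$ has at least $4k$ vertices, and by the remark immediately preceding Theorem~\ref{thm:gerards}, any odd-$K_3^2$ of odd-girth at least $2k+1$ has at least $6k$ vertices. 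Since a subgraph cannot have more vertices than $G$ itself, neither configuration fits inside a graph on fewer than $4k$ vertices. Hence $G\to C_{2k+1}$, which shows that every not-$C_{2k+1}$-colourable graph of odd-girth $2k+1$ has order at least $4k$.

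For the upper bound $\eta(k,k)\leq 4k$, I would exhibit a single graph on exactly $4k$ vertices that has odd-girth $2k+1$ and admits no homomorphism to $C_{2k+1}$. The natural candidate is a $(2k+1)$-odd-$K_4$ of minimum order, which by Proposition~\ref{prop:oddK4} has precisely $4k$ vertices and odd-girth $2k+1$. Concretely, by the equal-subdivision conclusion of that proposition, one takes the balanced subdivision where opposite edges receive equal lengths; a clean choice is the $(a,b,c)$-odd-$K_4$ with the three path-lengths summing to $2k+1$ and arranged so the total vertex count is $4k$. What remains is to verify that such an odd-$K_4$ does not map to $C_{2k+1}$. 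The key point is that $C_{2k+1}$ is vertex-transitive and its homomorphic images of an odd-cycle of the same length $2k+1$ must ``wrap around'' with full winding, leaving no slack; a $K_4$-subdivision forces four odd-cycles sharing branch vertices, and one shows these winding constraints around the four triangular faces are jointly unsatisfiable in $C_{2k+1}$.

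The main obstacle is exactly this last verification: proving that the minimal $(2k+1)$-odd-$K_4$ has no homomorphism to $C_{2k+1}$. The heart of the matter is a parity/winding argument. Assign to each vertex of $C_{2k+1}$ a value in $\mathbb{Z}_{2k+1}$; a homomorphism sends adjacent vertices to consecutive residues, so along any path the ``displacement'' between endpoints is congruent to the path length modulo $2$ and is constrained in absolute winding by the path length. Since each of the three subdivided branches meeting at a branch vertex has length at least $1$ and the four faces are tight odd-cycles of length exactly $2k+1$ (which map onto $C_{2k+1}$ only by traversing it an odd number of full turns), I would track the net winding numbers $w_i$ around the four faces. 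The relations among these windings—together with the fact that each edge of $K_4$ is shared by two faces, so its contribution appears with opposite signs—force a contradiction modulo the structure of $\mathbb{Z}_{2k+1}$, analogous to the linear relations $a=a'$, $b=b'$, $c=c'$ derived in Proposition~\ref{prop:oddK4}. Establishing this contradiction rigorously, rather than heuristically, is where the real work lies.
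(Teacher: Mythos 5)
Your argument is, for both bounds, the same as the paper's: the lower bound combines Theorem~\ref{thm:gerards} with Proposition~\ref{prop:oddK4} and the $6k$ bound for odd-$K_3^2$'s exactly as the paper does (the paper argues directly on a non-$C_{2k+1}$-colourable graph rather than by contraposition, but that is cosmetic), and the upper bound uses the same witness, a $(2k+1)$-odd-$K_4$ on $4k$ vertices. The one step you leave unfinished --- that a $(2k+1)$-odd-$K_4$ admits no homomorphism to $C_{2k+1}$ --- is precisely the step the paper does not prove either: it is simply asserted there as known. So relative to the paper you are not missing any idea; but as written your proposal is not self-contained, since you explicitly defer ``the real work'' of the winding contradiction to the reader.

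That work is only a few lines in the formalism you set up, so you should close it. Suppose $h$ is a homomorphism of a $(2k+1)$-odd-$K_4$ $K$, with branch vertices $t,u,v,w$, to $C_{2k+1}$ with vertex set $\mathbb{Z}_{2k+1}$. Each edge $xy$ of $K$, traversed from $x$ to $y$, gets a displacement $\delta\in\{+1,-1\}$ with $h(y)=h(x)+\delta \pmod{2k+1}$, and around any cycle of $K$ the displacements sum to a multiple of $2k+1$. For a face of length exactly $2k+1$ this sum is odd (it is a sum of $2k+1$ terms $\pm 1$) and of absolute value at most $2k+1$, hence equals $\pm(2k+1)$; so \emph{all} edges of that face carry the same sign, i.e.\ each face is traversed ``all $+1$'' or ``all $-1$''. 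Composing $h$ with a reflection of $C_{2k+1}$ if necessary, assume the face through $t,u,v$ is all $+1$ when traversed $t\to u\to v\to t$. The face through $t,u,w$ (traversed $t\to u\to w\to t$) shares the $tu$-thread in the same direction, so it too is all $+1$; in particular the $uw$-thread is all $+1$ in the direction $u\to w$. The face through $u,v,w$ (traversed $u\to v\to w\to u$) shares the $uv$-thread in the same direction, so it also is all $+1$; in particular the $uw$-thread is all $+1$ in the direction $w\to u$, i.e.\ all $-1$ in the direction $u\to w$. Since that thread has at least one edge, this is a contradiction. With this inserted (or with the folklore fact cited), your proof is complete and coincides with the paper's.
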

\begin{proof}
  Consider a $C_{2k+1}$-critical graph $G$ of odd-girth~$2k+1$. It follows from Theorem~\ref{thm:gerards} that $G$ contains either an odd-$K_4$, or an odd-$K_3^2$. If it contains the latter, then $G$ has at least $6k$ vertices. Otherwise, $G$ must contain an odd-$K_4$ of odd-girth at least~$2k+1$, and then by Proposition~\ref{prop:oddK4}, $G$ has at least $4k$ vertices. This shows that $\eta(k,C_{2k+1})\geq 4k$.

  Moreover, no $(2k+1)$-odd-$K_4$ admits a homomorphism to $C_{2k+1}$. Indeed, assume by contradiction that there is such a homomorphism $\varphi$ of a $(2k+1)$-odd-$K_4$, $G$, to $C_{2k+1}$, and let $x,y,z,t$ be the vertices of degree~$3$ of $G$. Consider the $(2k+1)$-cycle $C$ that passes through $x,y,z$. This cycle must be mapped surjectively to $C_{2k+1}$ by $\varphi$. Thus, $t$ must be identified with some vertex of $C$ by $\varphi$. If it is identified with a vertex of the $xy$-thread of $G$, then the $(2k+1)$-cycle going through $x,y,t$ would not be mapped surjectively to $C$ by $\varphi$, a contradiction. The same argument applies to the $xz$-thread and the $yz$-thread, showing that $t$ cannot be mapped and thus $\varphi$ does not exist.

  Since any $(2k+1)$-odd-$K_4$ has order~$4k$ by Proposition~\ref{prop:oddK4}, the latter paragraph shows that $\eta(k,C_{2k+1})\leq 4k$.
\end{proof}

Since $C_{2\ell +3}$ maps to $C_{2\ell +1}$ and by transitivity of homomorphisms, a graph with no homomorphism to $C_{2\ell +1}$ also has no homomorphism to $C_{2\ell +3}$. Thus:

\begin{observation}\label{obs:monotone-columns}
Let $k,\ell$ be two positive integers. We have $\eta(k,C_{2\ell+1})\geq\eta(k,C_{2\ell+3})$.
\end{observation}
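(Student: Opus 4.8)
The plan is to show that the \emph{same} graph witnessing $\eta(k,l)$ already witnesses $\eta(k,l+1)$, so that no new construction is needed. Concretely, I would let $G$ be a smallest graph of odd-girth $2k+1$ admitting no homomorphism to $C_{2l+1}$, so that $|V(G)|=\eta(k,l)$, and argue that $G$ is also a graph of odd-girth $2k+1$ that admits no homomorphism to $C_{2(l+1)+1}=C_{2l+3}$. Since $G$ itself has exactly $\eta(k,l)$ vertices and its odd-girth is unchanged (it is literally the same graph), this immediately yields $\eta(k,l+1)\leq\eta(k,l)$.

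The one ingredient to isolate is the existence of a homomorphism $C_{2l+3}\to C_{2l+1}$. This is the standard fact that a longer odd cycle maps to a shorter one; alternatively it follows at once from the circular-clique framework recalled in Section~\ref{sec:prelim}, since $C_{2l+3}=C(2l+3,l+1)$ and $C_{2l+1}=C(2l+1,l)$ with $\frac{2l+3}{l+1}=2+\frac{1}{l+1}<2+\frac{1}{l}=\frac{2l+1}{l}$, and $C(p,q)\to C(r,s)$ whenever $\frac{p}{q}\leq\frac{r}{s}$.

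With this in hand I would argue by contraposition, using transitivity of homomorphisms. Suppose, for contradiction, that $G\to C_{2l+3}$. Composing with the homomorphism $C_{2l+3}\to C_{2l+1}$ gives $G\to C_{2l+1}$, contradicting the choice of $G$. Hence $G$ admits no homomorphism to $C_{2l+3}$, which is exactly what was needed; $G$ is thus a graph of odd-girth $2k+1$ of order $\eta(k,l)$ with no homomorphism to $C_{2(l+1)+1}$, so $\eta(k,l+1)\leq\eta(k,l)$.

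I do not expect any genuine obstacle here: the statement is a monotonicity remark, and the proof is immediate once one records that odd cycles map to shorter odd cycles and that homomorphisms compose. The only point worth a sentence is that passing from $l$ to $l+1$ alters neither the graph nor its odd-girth, so the minimal attainable order can only decrease (or stay equal) as $l$ grows.
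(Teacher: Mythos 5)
Your proof is correct and is essentially the paper's own argument: the paper likewise observes that $C_{2l+3}\to C_{2l+1}$, so by transitivity any graph with no homomorphism to $C_{2l+1}$ also has none to $C_{2l+3}$, whence the extremal graph for $\eta(k,l)$ witnesses $\eta(k,l+1)\leq\eta(k,l)$. Your extra justification via circular cliques and the minor slip of calling the argument ``contraposition'' when it is a proof by contradiction do not change anything of substance.
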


We obtain this immediate consequence of Corollary~\ref{cor:subd-K4} and Observation~\ref{obs:monotone-columns}:

\begin{corollary}\label{cor:eta(k,l)>=4k}
For any two integers $k,\ell$ with $k\geq \ell\geq 1$, we have $\eta(k,C_{2\ell+1})\geq 4k$.
\end{corollary}

\section{Rows of Table~\ref{table}: $\eta(k,C_{2\ell+1})$ for fixed $\ell$}\label{sec:RowsColumns}\label{sec:4k}

In this section, we study the behavior of $\eta(k,C_{2\ell+1})$ when $\ell$ is a fixed value, that is, the behavior of each row of Table~\ref{table}. Note again that whenever $\ell\geq k+1$, we have $\eta(k,C_{2\ell+1})=2k+1$. %the tail of the $k$-th column is the fixed value $2k+1$. 
As mentioned before, the first row (i.e. $\ell=1$) is about the smallest order of a $4$-critical graph of odd-girth $2k+1$ and we know $\eta(k,C_3)=\Theta(k^2)$~\cite{N99}.

It is not difficult to observe that for fixed $\ell$, the function $\eta(k,C_{2\ell+1})$ is strictly increasing, in fact with a little bit of effort we can even show the following.

\begin{proposition}\label{prop:etaIncreasing}
For $k\geq \ell$, we have $\eta(k+1,C_{2\ell+1})\geq \eta(k,C_{2\ell+1})+2$.
\end{proposition}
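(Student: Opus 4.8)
The plan is to prove the contrapositive-flavoured statement directly: start from a smallest graph $G$ of odd-girth $2k+3$ with no homomorphism to $C_{2l+1}$, so that $|V(G)|=\eta(k+1,l)$, and manufacture from it a graph $G'$ of odd-girth $2k+1$ with no homomorphism to $C_{2l+1}$ and with $|V(G')|\leq|V(G)|-2$. Since $G'$ witnesses the defining property of $\eta(k,l)$, this gives $\eta(k,l)\leq\eta(k+1,l)-2$, which is exactly the claimed inequality $\eta(k+1,l)\geq\eta(k,l)+2$. The natural operation to lower the odd-girth by exactly $2$ while deleting exactly $2$ vertices is a local path-contraction: identify a short induced path and replace it by a shorter one.

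Concretely, I would first argue that $G$ (being a core, or at least minimal non-$C_{2l+1}$-colourable) must actually contain a shortest odd-cycle $C$ of length exactly $2k+3$, and that we may assume $G$ has minimum degree at least $2$. Pick three consecutive vertices $u,v,w$ on such an odd-cycle $C$, with $v$ the middle one. I would like to contract the path $u\,v\,w$ to a single edge (identifying $u$ and $w$, deleting $v$), which removes two vertices and shortens $C$ to an odd-cycle of length $2k+1$. The forward direction is easy: if the resulting $G'$ mapped to $C_{2l+1}$, one would want to pull that homomorphism back to $G$; because $C_{2l+1}$ has an automorphism shifting by one step, a homomorphism of $G'$ can be extended across the reinserted path $u\,v\,w$, yielding $G\to C_{2l+1}$, a contradiction. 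So the key is to choose the contracted path so that (i) the new odd-girth is exactly $2k+1$ (not smaller), and (ii) the map-lifting argument goes through.

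The main obstacle, and where the real work lies, is controlling the odd-girth after the contraction. Contracting $u\,v\,w$ could create a shorter odd-cycle: if there is a short path between $u$ and $w$ avoiding $v$, identifying them may close up an odd-cycle of length less than $2k+1$. To rule this out I would use that the odd-girth of $G$ is $2k+3$ together with a careful choice of the three vertices — ideally choosing $v$ on a shortest odd-cycle and arguing that any dangerous short $u$--$w$ path would, combined with arcs of $C$, already produce an odd-cycle shorter than $2k+3$ in $G$ itself, contradicting the odd-girth hypothesis. One must handle the parity bookkeeping: a $u$--$w$ path of a given parity combines with the two arcs of $C$ between $u$ and $w$ to give cycles whose lengths are constrained below by $2k+3$, and translating these constraints through the identification shows the new odd-girth cannot drop below $2k+1$. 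If a single contraction cannot be guaranteed to behave, a fallback is to instead delete the middle vertex $v$ and add the edge $uw$, or to contract an edge of $C$, and to select the location using a counting/averaging argument over the cycle $C$; but I expect the cleanest route is the path-identification above with the odd-girth verified via the $l\leq k$ hypothesis, which guarantees enough ``room'' (recall that for $l\le k$ we already know $\eta(k,l)\ge 4k$ from Corollary~\ref{cor:eta(k,l)>=4k}, so $G$ is genuinely large and structurally rich).

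Assuming the odd-girth is controlled, the remaining steps are routine: verify that $G'$ still has no homomorphism to $C_{2l+1}$ via the lifting argument, confirm $|V(G')|=|V(G)|-2$, and conclude $\eta(k,l)\leq|V(G')|=\eta(k+1,l)-2$. I would present the odd-girth verification as the single substantive lemma and keep the homomorphism-lifting as a short paragraph, since the extendability of a $C_{2l+1}$-colouring across a reinserted edge-subdivided-into-a-path is immediate from the fact that consecutive vertices of $C_{2l+1}$ are adjacent and the cycle is vertex-transitive.
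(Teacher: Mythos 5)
Your construction has a genuine gap, and it sits exactly in the step you dismiss as easy. When you delete the middle vertex $v$ and identify $u$ with $w$, you also delete every edge joining $v$ to vertices \emph{outside} the path $u\,v\,w$. A homomorphism $h\colon G'\to C_{2l+1}$ therefore puts no constraint on those former neighbours of $v$, and your lifting argument --- pick for $v$ a colour adjacent to the common colour $h(u)=h(w)$ --- only works when $u$ and $w$ are $v$'s \emph{only} neighbours. If $v$ has a further neighbour $z$, you need a colour of $C_{2l+1}$ adjacent simultaneously to $h(u)$ and $h(z)$, and such a colour need not exist; nothing guarantees that some shortest odd cycle of $G$ carries a degree-$2$ vertex. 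The failure is real, not just a hole in the write-up: apply your operation to the Gr\"otzsch graph (the case $k=l=1$, where $G$ is the unique extremal graph), taking $u,v,w$ to be three consecutive vertices $x_0,x_1,x_2$ of the outer $5$-cycle. Writing $X$ for the merged vertex, $z$ for the apex and $y_0,\dots,y_4$ for the middle layer, the resulting $9$-vertex graph is properly $3$-colourable (give $X$ and $z$ colour $1$, give $x_3,x_4$ colours $2,3$, and colour each $y_i$ with $2$ or $3$ accordingly), so it is \emph{not} a witness of non-$C_3$-colourability. Your fallbacks are worse: replacing $u\,v\,w$ by the edge $uw$, or contracting one edge of $C$, shortens the odd cycle by an odd amount and turns it into an even cycle, destroying the parity (and the edge-contraction removes only one vertex).

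The paper's proof uses the same idea --- shrink a $(2k+3)$-cycle $v_0\cdots v_{2k+2}$ by two --- but with identifications only and no deletion: it identifies $v_0$ with $v_2$ \emph{and} $v_1$ with $v_3$. Since no vertex or edge is discarded, the quotient map $G\to G'$ is itself a homomorphism, so $G'\to C_{2l+1}$ would give $G\to C_{2l+1}$ by plain composition; no extension argument, and no assumption on the degrees of the contracted vertices, is needed. Ironically, the part you regard as the real work --- that the odd-girth does not drop below $2k+1$ --- is the routine part in both constructions: an odd cycle of length $m$ in $G'$ lifts to an odd closed walk of $G$ after inserting a path of length $2$ at each identified vertex it visits, and if $m\leq 2k-1$ this walk either has length less than $2k+3$ or has length $2k+3$ with a repeated vertex; in both cases it contains an odd cycle of $G$ shorter than $2k+3$, a contradiction. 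In particular no appeal to Corollary~\ref{cor:eta(k,l)>=4k} or to the hypothesis $l\leq k$ is needed there. The correct repair of your argument is precisely the paper's move: keep $v_1$ and fold it onto $v_3$ instead of deleting it.
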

\begin{proof}
Let $G$ be a $C_{2\ell +1}$-critical graph of odd-girth~$2k+3$ and order $\eta(k+1,C_{2\ell+1})$. Consider any $(2k+3)$-cycle $v_0\cdots v_{2k+2}$ of $G$, and build a smaller graph by identifying $v_0$ with $v_2$ and $v_1$ with $v_3$. It is not difficult to check that the resulting graph has odd-girth exactly $2k+1$ and does not map to $C_{2\ell +1}$ (otherwise, $G$ would), proving the claim.
\end{proof}

While it is likely that for a fixed $\ell$, the value of $\eta(k,C_{2\ell+1})$ grows quadratically in terms of $k$, we show, somewhat surprisingly, that at least just after the threshold of $k=\ell$, the function $\eta(k,C_{2\ell+1})$ only increases by $4$ when $\ell$ increases by~$1$, implying that Proposition~\ref{prop:etaIncreasing} cannot be improved much in this formulation.
More precisely, we have the following theorem.

\begin{theorem}\label{thm:columns}
For any $k,\ell\geq 3$ and $\ell\leq k\leq\frac{3\ell+i-3}{2}$ (where $2k=i\bmod 3$), we have $\eta(k,C_{2\ell+1})=4k$.
\end{theorem}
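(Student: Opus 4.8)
We must establish both $\eta(k,l)\ge 4k$ and $\eta(k,l)\le 4k$ under the hypothesis $l\le k\le\frac{3l+i-3}{2}$. The lower bound is already free: since $k\ge l\ge 1$, Corollary~\ref{cor:eta(k,l)>=4k} gives $\eta(k,l)\ge 4k$ immediately. So the entire content of the theorem is the upper bound, namely exhibiting, for each admissible pair $(k,l)$, a graph of odd-girth exactly $2k+1$, on exactly $4k$ vertices, that admits no homomorphism to $C_{2l+1}$. The natural candidate is a $(2k+1)$-odd-$K_4$, since by Proposition~\ref{prop:oddK4} these are exactly the graphs meeting the lower bound with equality, and they have the correct odd-girth by construction.

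The plan is therefore to select a specific $(a,b,c)$-odd-$K_4$ (with $a+b+c=2k+1$) and prove that it does not map to $C_{2l+1}$. To make the construction uniform across the range of $k$, I would try to balance the subdivision lengths as evenly as possible, choosing $a,b,c$ close to $(2k+1)/3$; this is exactly where the arithmetic side-conditions $2k\equiv i\pmod 3$ and $k\le\frac{3l+i-3}{2}$ should enter, pinning down the precise values of $a,b,c$ and translating the inequality on $k$ into a usable inequality on the subdivision lengths. The key structural fact to exploit is that a homomorphism to $C_{2l+1}$ assigns to every closed walk a net winding that is a multiple of $2l+1$, and that along any path the endpoints' ``heights'' differ from the path length by an even number bounded by (path length) and constrained modulo $2$. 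First I would set up coordinates: identify $C_{2l+1}$ with $\mathbb{Z}_{2l+1}$ and, for any homomorphism $\vphi$, track the signed displacement along each of the six subdivided paths of the odd-$K_4$.

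The heart of the argument is a winding/parity obstruction. For each of the four odd triangles of the odd-$K_4$, the images under $\vphi$ form a closed walk in $C_{2l+1}$, so the sum of the three signed path-displacements around that triangle must be a nonzero multiple of $2l+1$ (nonzero because the walk has odd length $2k+1>2l+1$ when $k>l$, and more carefully because it cannot be null-homotopic in an odd cycle while having odd length). Each individual path of length $m$ contributes a displacement of the same parity as $m$ and of absolute value at most $m$. Combining the four triangle equations (their displacements are not independent — they share the three pairs of disjoint paths, and by Proposition~\ref{prop:oddK4} disjoint edges are subdivided equally) should force the total winding around the whole structure to be too large to be realized, i.e.\ it would require some path to wind more than its length permits. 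I expect the main obstacle to be precisely this feasibility count: showing that the system of four congruences-plus-inequalities has no solution in displacements exactly when $k\le\frac{3l+i-3}{2}$, and conversely that the construction's odd-girth stays at $2k+1$ (no shorter odd-cycle is created). I would handle the girth claim first as a quick lemma from the balanced choice of $a,b,c$, then devote the bulk of the proof to the displacement infeasibility, organizing it as a short case analysis on $i\in\{0,1,2\}$ (equivalently on $2k\bmod 3$) to keep the inequalities clean.
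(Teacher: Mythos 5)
Your overall architecture matches the paper's: the lower bound is quoted from Corollary~\ref{cor:eta(k,l)>=4k}, and the upper bound is witnessed by a $(2k+1)$-odd-$K_4$ whose three subdivision parameters are as equal as possible; your remark that the hypothesis $l\le k\le\frac{3l+i-3}{2}$ is exactly what pins down these parameters is correct, and it amounts to saying that the largest part $p$ of the balanced partition of $2k+1$ satisfies $p\le l$ (the paper takes $(p-1,p-1,p)$, $(p-1,p,p)$ or $(p,p,p)$ according to $2k\bmod 3$). Where you genuinely diverge is in how the non-existence of the homomorphism is proved. The paper's Theorem~\ref{thm:4k} works only with the target $C_{2p+1}$ and then transfers to $C_{2l+1}$ via the monotonicity $\eta(k,l)\le\eta(k,p)$ (Observation~\ref{obs:monotone-columns}); its argument is structural: under a homomorphism, the image of one subdivided triangle is all of $C_{2p+1}$, so the fourth branch vertex $u$ shares an image with a vertex $u'$ of that triangle, and identifying $u$ with $u'$ creates an odd cycle of length at most $2p-1$, a contradiction. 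Your plan instead introduces signed displacements on the six threads and aims directly at $C_{2l+1}$ through a winding-number infeasibility. This route is viable, and it buys something: carried out, it proves the cleaner and more general statement that \emph{any} $(a,b,c)$-odd-$K_4$ with $a,b,c\le l$ admits no homomorphism to $C_{2l+1}$, with no need for the monotonicity detour; the paper's identification trick, by contrast, is shorter but is tailored to the three balanced triples.

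The caveat is that, as written, the decisive step is announced rather than proved: ``should force'' and ``I expect the main obstacle to be precisely this feasibility count'' is exactly where the proof has to happen, and it is the analogue of the paper's whole Case 1/Case 2 analysis. For the record, it does close, by sign propagation rather than by any ``total winding too large'' count. Since $a+b+c=2k+1\le 3l<3(2l+1)$ and each triangle is an odd closed walk, each of the four windings equals $\pm(2l+1)$ exactly (note that the winding is nonzero because it has the parity of the cycle length, which is odd --- your parenthetical's appeal to $2k+1>2l+1$ is not the reason); moreover only three of the four triangle equations are independent, the dependency forcing $\epsilon_1-\epsilon_2+\epsilon_3-\epsilon_4=0$ for the four signs $\epsilon_i$. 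Now normalize the triangle $tuv$ to have winding $+(2l+1)$: since every displacement has absolute value at most $\max(a,b,c)\le l$, all three displacements on this triangle are forced to be strictly positive; the triangle $tuw$ then cannot have winding $-(2l+1)$ (that would force its $tu$-displacement negative), so its winding is $+(2l+1)$, forcing the $uw$-displacement positive and the $tw$-displacement negative; the dependency then makes the last two triangles share a sign, and either choice contradicts a positivity already established. So your plan is sound and genuinely different from the paper's, but this case analysis is the entire content of the key lemma and is missing from the proposal; without it, what you have is a correct reduction of the theorem to an unproved claim.
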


To prove this theorem, we give a family of $C_{2\ell +1}$-critical odd-$K_4$'s which are of odd-girth $2k+1$. This is done in the next theorem, after which we give a proof of Theorem~\ref{thm:columns}.

\begin{theorem}\label{thm:4k}
Let $p\geq 3$ be an integer. If $p$ is odd, any $(a,b,c)$-odd-$K_4$ with $(a,b,c)\in\{(p-1,p-1,p),(p,p,p)\}$ has no homomorphism to $C_{2p+1}$. If $p$ is even, any $(p-1,p,p)$-odd-$K_4$ has no homomorphism to $C_{2p+1}$.
\end{theorem}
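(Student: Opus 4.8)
The plan is to assume that some $(a,b,c)$-odd-$K_4$, call it $K$, admits a homomorphism $\varphi\colon K\to C_{2p+1}$ and to derive a contradiction purely from the lengths of the cycles of $K$. Write $n:=2p+1$ and use the standard model of $C_n$ on $\mathbb{Z}_n$ with $i\sim i\pm 1$. Let $v_1,\dots,v_4$ be the four branch vertices; by Proposition~\ref{prop:oddK4} the three pairs of opposite threads have lengths $a$, $b$ and $c$. Lifting $\varphi$ along a thread to the universal cover $\mathbb{Z}$ of $C_n$, I record for each thread its \emph{signed length} $s$, the net displacement of the lift; this is an integer with $|s|$ at most the length of the thread and with $s$ congruent to that length modulo $2$, and it does not depend on the chosen lift. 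The only global constraint I will use is that along any closed walk of $K$ the signed lengths sum to $0$ modulo $n$, since the walk closes up in $C_n$.

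The key step is to determine the winding of $\varphi$ on each of the four subdivided triangles. Such a triangle is a cycle of length $a+b+c$, so its signed-length sum $S$ satisfies $S\equiv 0\pmod n$, $|S|\le a+b+c$, and $S\equiv a+b+c\equiv 1\pmod 2$. In each of the three cases of the statement one checks $n\le a+b+c<2n$ (indeed $a+b+c\le 3p<4p+2=2n$), so the only multiples of $n$ in $[-(a+b+c),\,a+b+c]$ are $-n,0,n$; as $S$ is odd it is not $0$, whence $S=\pm n$. Thus every triangle winds exactly once around $C_n$, and I attach a sign $\varepsilon\in\{+1,-1\}$ to each of the four triangles. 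I expect this to be the crux of the argument, and the only place where the hypothesis on $(a,b,c)$ enters: the whole proof breaks down the moment a triangle is long enough to wind twice, which is exactly what the bound $a+b+c<2n$ forbids.

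To conclude, I would pass to the three quadrilaterals of $K_4$, that is, the three $4$-cycles through all of $v_1,\dots,v_4$; in $K$ these are cycles of lengths $2(a+b)$, $2(b+c)$ and $2(a+c)$, each obtained by omitting one pair of opposite threads. Each quadrilateral is the symmetric difference of two of the four triangles, so, signed lengths being additive over the cycle space, its signed-length sum equals $(\pm1\pm1)n$ and hence lies in $\{0,\pm 2n\}$. A direct inspection of the possible sign patterns of the four triangle-windings shows that the three quadrilaterals cannot all have signed-length sum $0$ (forcing all three to vanish would force one triangle-sign $\varepsilon$ to be $0$). Therefore some quadrilateral has signed-length sum $\pm 2n$, which forces its length to be at least $2n$; equivalently, one of $a+b$, $b+c$, $a+c$ is at least $n=2p+1$. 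This is the desired contradiction, since for each of the triples $(p-1,p-1,p)$, $(p,p,p)$ and $(p-1,p,p)$ every pairwise sum is at most $2p$. Hence none of these odd-$K_4$'s maps to $C_{2p+1}$.
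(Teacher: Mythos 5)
Your proof is correct, and it takes a genuinely different route from the paper's. The paper argues by vertex identification: since the subdivided triangle avoiding the fourth branch vertex $u$ is an odd cycle, its image under any homomorphism must be all of $C_{2p+1}$, so $u$ shares an image with some vertex $u'$ of that triangle; identifying $u$ with $u'$ then produces an odd cycle of length at most $2p-1$ (after a short case analysis on whether $u'$ is a branch vertex or interior to a thread, split according to the parity of $p$ and the triple $(a,b,c)$), and such a cycle cannot map to $C_{2p+1}$. Your argument is global and algebraic: every subdivided triangle must have winding number $\pm 1$ (parity plus $a+b+c<2(2p+1)$), and additivity of winding over the cycle space of $K_4$ forces one of the three quadrilaterals to wind $\pm 2$ around $C_{2p+1}$, which its length ($2(a+b)$, $2(b+c)$ or $2(a+c)$, all at most $4p$) cannot afford; your parenthetical justification that not all three quadrilateral windings can vanish is the right one, since that would force a triangle winding to be $0$. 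What your route buys is uniformity and generality: it dispenses with the case distinctions on the parity of $p$ and on the position of $u'$, and it isolates a clean sufficient criterion --- an $(a,b,c)$-odd-$K_4$ admits no homomorphism to $C_{2p+1}$ whenever $a+b$, $b+c$ and $c+a$ are all at most $2p$ --- of which the three stated triples are exactly the extremal instances. What the paper's route buys is that it stays entirely within the elementary identification arguments used in the rest of the paper, with no covering-space language. One aside of yours is imprecise: since $2p+1$ is odd, an odd cycle must wind an odd number of times, so the first obstruction beyond $\pm 1$ is winding three times, not twice; your method therefore actually tolerates $a+b+c<3(2p+1)$, and the bound $a+b+c<2n$ you verified is comfortable rather than the exact breaking point. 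This does not affect correctness.
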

\begin{proof}
Let $(a,b,c)\in \{(p-1,p-1,p),(p,p,p),(p-1,p,p)\}$ and let $K$ be an $(a,b,c)$-odd-$K_4$. Let $t$, $u$, $v$, $w$ be the vertices of degree~$3$ in $K$ with the $tu$-thread of length~$a$, the $uv$-thread of length~$b$ and the $tv$-thread of length~$c$. We now distinguish two cases depending on the parity of $p$ and the values of $(a,b,c)$.

\medskip\noindent\emph{Case 1.} Assume that $p$ is odd and $(a,b,c)\in\{(p-1,p-1,p),(p,p,p)\}$. By contradiction, we assume that there is a homomorphism $h$ of $K$ to $C_{2p+1}$. Then, the cycle $C_{tvw}$ formed by the union of the $tv$-thread, the $vw$-thread and the $tw$-thread is an odd-cycle of length $a+b+c$. Therefore, its mapping by $h$ to $C_{2p+1}$ must be \emph{onto}. Thus, $u$ has the same image by $h$ as some vertex $u'$ of $C_{tvw}$. 
Note that $u'$ is not one of $t$, $v$ or $w$, indeed by identifying $u$ with any of these vertices we obtain a graph containing an odd-cycle of length~$p$ or~$2p-1$; thus, this identification cannot be extended to a homomorphism to $C_{2p+1}$. Therefore, $u'$ is an internal vertex of one of the three maximal threads in $C_{tvw}$. Let $C_u$ be the odd-cycle of length $a+b+c$ containing $u$ and $u'$. After identifying $u$ and $u'$, $C_u$ is transformed into two cycles, one of them being odd. If $(a,b,c)=(p,p,p)$, then $C_{u}$ has length~$3p$. Then, the two newly created cycles have length at least~$p+1$, and thus at most $2p-1$. If $(a,b,c)=(p-1,p-1,p)$, then $C_{u}$ has length $3p-2$, and the two cycles have length at least~$p$ and at most $2p-2$. In both cases, we have created an odd-cycle of length at most $2p-1$. Hence, this identification cannot be extended to a homomorphism to $C_{2p+1}$, a contradiction.

\medskip\noindent\emph{Case 2.} Assume that $p$ is even and $(a,b,c)=(p-1,p,p)$, and that $h$ is a homomorphism of $K$ to $C_{2p+1}$. Again, the image of $C_{tvw}$ by $h$ is onto, and $u$ has the same image as some vertex $u'$ of $C_{tvw}$. If $u'=t$, identifying $u$ and $u'$ produces an odd $(p-1)$-cycle, a contradiction. If $u'\in\{v,w\}$, then we get a $(2p-1$)-cycle, a contradiction. Thus, $u'$ is an internal vertex of one of the three maximal threads in $C_{tvw}$. Let $C_u$ be the odd-cycle of length~$3p-1$ containing $u$ and $u'$. As in Case~$1$, after identifying $u$ and $u'$, $C_{u}$ is transformed into two cycles, each of length at least~$p$ and at most~$2p-1$; one of them is odd, a contradiction.
\end{proof}

We note that Theorem~\ref{thm:4k} is tight, in the sense that if $p$ is odd and $(a,b,c)\in\{(p-1,p-1,p),(p,p,p)\}$ or if $p$ is even and $(a,b,c)=(p-1,p,p)$, then an $(a,b,c)$-odd-$K_4$ has a homomorphism to $C_{2p-1}$.

We can now prove Theorem~\ref{thm:columns}.

\begin{proof}[Proof of Theorem~\ref{thm:columns}]
By Corollary~\ref{cor:eta(k,l)>=4k}, we know that $\eta(k,C_{2\ell+1})\geq 4k$.
We now prove the upper bound. Recall that $\eta(k,C_{2\ell+1})\leq\eta(k,C_{2\ell-1})$. If $2k \equiv 0\pmod 3$, then $p=\frac{2k+3}{3}$ is an odd integer, and $p\leq \ell$. By Theorem~\ref{thm:4k}, a $(p-1,p-1,p)$-odd-$K_4$, which has order $6p-6=4k$, has no homomorphism to $C_{2p+1}$, and thus $\eta(k,C_{2\ell+1})\leq\eta(k,C_{2p+1})\leq 4k$. Similarly, if $2k\equiv 1\pmod 3$, then $p=\frac{2k+2}{3}$ is an even integer, and $p\leq \ell$. By Theorem~\ref{thm:4k}, a $(p-1,p,p)$-odd-$K_4$, which has order $6p-4=4k$, has no homomorphism to $C_{2p+1}$, and thus $\eta(k,C_{2\ell+1})\leq\eta(k,C_{2p+1})\leq 4k$. Finally, if $2k\equiv 2\pmod 3$, then $p=\frac{2k+1}{3}$ is an odd integer, and $p\leq \ell$. By Theorem~\ref{thm:4k}, a $(p,p,p)$-odd-$K_4$, which has order $6p-2=4k$, has no homomorphism to $C_{2p+1}$, and thus $\eta(k,C_{2\ell+1})\leq\eta(k,C_{2p+1})\leq 4k$.
\end{proof}

\section{The value of $\eta(3,C_5)$}\label{sec:eta(3,2)}

We now determine $\eta(3,C_5)$, which is not covered by Theorem~\ref{thm:4k}. By Corollary~\ref{cor:eta(k,l)>=4k}, we know that $\eta(3,C_5)\geq 12$. In fact, we will show that $\eta(3,C_5)=15$.

Examples of graphs of odd-girth 7 on 15 vertices are given in Figure~\ref{fig:2graphsOrder15}. In fact, 
using a computer search, Gordon Royle (private communication, 2016) has verified that these two graphs are the only two $C_5$-critical graphs of order $15$ and odd-girth~$7$. 

\begin{figure}[!htpb]
\begin{center}
\subfigure{\scalebox{1}{\begin{tikzpicture}[join=bevel,inner sep=.5mm,scale=0.7]

   \foreach \i in {1,...,8} 
		{
			\draw[rotate=360/8*\i] (0, 4) node[circle, draw=black!80, inner sep=0mm, minimum size=1.8mm] (u\i){};
		}

\foreach \j in {1,...,7} 
		{
			\draw[rotate=360/7*(\j+1)] (0, 2.4) node[circle, draw=black!80, inner sep=0mm, minimum size=1.8mm] (v\j){};
		}

	\foreach \i/\j in {1/2,2/3,3/4,4/5,5/6,6/7,7/8,8/1}
		{
			\draw  [line width=0.4mm] (u\i) -- (u\j);
		}

	\foreach \i/\j in {1/2,2/3,3/4,4/7,7/6,6/5,5/1}
		{
			\draw  [line width=0.4mm] (v\i) -- (v\j);
		}
		
\foreach \i/\j in {1/7,7/5,3/1,5/4}
		{
			\draw  [line width=0.4mm] (u\i) -- (v\j);
		}
\end{tikzpicture}}}\qquad
\subfigure{\scalebox{1}{\begin{tikzpicture}[join=bevel,inner sep=0.5mm,scale=0.7]

   \foreach \i in {1,...,8} 
		{
			\draw[rotate=360/8*\i] (0, 4) node[circle, draw=black!80, inner sep=0mm, minimum size=1.8mm] (u\i){};
		}

\foreach \j in {1,...,7} 
		{
			\draw[rotate=360/7*(\j+1)-6] (0, 2.4) node[circle, draw=black!80, inner sep=0mm, minimum size=1.8mm] (v\j){};
		}

	\foreach \i/\j in {1/2,2/3,3/4,4/5,5/6,6/7,7/8,8/1}
		{
			\draw  [line width=0.4mm] (u\i) -- (u\j);
		}

	\foreach \i/\j in {1/2,2/3,3/4,4/5,5/6,6/7,7/1}
		{
			\draw  [line width=0.4mm] (v\i) -- (v\j);
		}

\foreach \i/\j in {8/5,7/4,2/2, 3/3,5/7}
		{
			\draw  [line width=0.4mm] (u\i) -- (v\j);
		}
	
\end{tikzpicture}}}
\end{center}
\caption{The two $C_5$-critical graphs of order~$15$ and odd-girth~$7$.}
\label{fig:2graphsOrder15}
\end{figure}

One may add a few edges to each of these two graphs without creating a shorter odd cycle. G. Royle reports that there are eleven non isomorphic graphs built in this way. For the most symmetric of all these graphs two different presentations are given in Figure~\ref{fig:SymmetricGraphsOrder15}. 

\begin{figure}[!htpb]
\begin{center}
\subfigure{\scalebox{1}{\begin{tikzpicture}[join=bevel,inner sep=.5mm,scale=0.7]

   \foreach \i in {1,...,9} 
		{
			\draw[rotate=360/9*\i] (0, 4) node[circle, draw=black!80, inner sep=0mm, minimum size=1.8mm] (u\i){};
		}

\foreach \j in {1,...,6} 
		{
			\draw[rotate=360/6*(\j+1)-9] (0, 2.4) node[circle, draw=black!80, inner sep=0mm, minimum size=1.8mm] (v\j){};
		}

	\foreach \i/\j in {1/2,2/3,3/4,4/5,5/6,6/7,7/8,8/9,9/1}
		{
			\draw  [line width=0.4mm] (u\i) -- (u\j);
		}

	\foreach \i/\j in {1/2,3/4,5/6, 1/4,2/5,3/6}
		{
			\draw  [line width=0.4mm] (v\i) -- (v\j);
		}
		
\foreach \i/\j in {3/1,4/2,6/3,7/4,9/5,1/6}
		{
			\draw  [line width=0.4mm] (u\i) -- (v\j);
		}
\end{tikzpicture}}}\qquad
\subfigure{\scalebox{1}{\begin{tikzpicture}[join=bevel,inner sep=0.5mm,scale=0.7]

   \foreach \i in {1,...,6} 
		{
			\draw[rotate=360/6*\i] (0, 2.4) node[circle, draw=black!80, inner sep=0mm, minimum size=1.8mm] (u\i){};
		}

\foreach \i in {1,...,6} 
		{
			\draw[rotate=360/6*\i] (0, 4) node[circle, draw=black!80, inner sep=0mm, minimum size=1.8mm] (v\i){};
		}

\foreach \i in {1,2,3} 
		{
			\draw[rotate=360/3*\i+30] (.5, .5) node[circle, draw=black!80, inner sep=0mm, minimum size=1.8mm] (w\i){};
		}

	\foreach \i/\j in {1/2,3/4,5/6}
		{
			\draw  [line width=0.4mm] (u\i) -- (u\j);
		}

	\foreach \i/\j in {1/2,2/3,3/4,4/5,5/6,6/1}
		{
			\draw  [line width=0.4mm] (v\i) -- (v\j);
		}

\foreach \i in {1,...,6}
		{
			\draw  [line width=0.4mm] (u\i) -- (v\i);
		}
		
\foreach \i/\j in {1/3,1/6, 2/2, 2/5,3/1,3/4}
		{
			\draw  [line width=0.4mm] (w\i) -- (u\j);
		}
	
\end{tikzpicture}}}
\end{center}
\caption{Two drawings of a symmetric graph of odd-girth~$7$ which does not map to $C_5$.}
\label{fig:SymmetricGraphsOrder15}
\end{figure}

This implies that $\eta(3,C_5)\leq 15$. Next, we prove that this upper bound is tight.

\begin{theorem}\label{thm:OG7-order14-mappingC5}\label{thm:15}
Any graph $G$ of order at most $14$ and odd-girth at least~$7$ admits a homomorphism to $C_5$, and thus $\eta(3,C_5)=15$.
\end{theorem}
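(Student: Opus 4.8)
My plan is to argue by contradiction through a minimal counterexample and to reduce, via Gerards' theorem, to a single obstruction. Suppose for contradiction that $G$ is a graph of order at most $14$ and odd-girth at least $7$ with $G\not\to C_5$, chosen with the fewest vertices and then the fewest edges; passing to its core (which is a subgraph, hence still of odd-girth at least $7$) I may assume $G$ is a core, and by minimality every proper subgraph of $G$ maps to $C_5$. By Theorem~\ref{thm:gerards}, $G$ must contain an odd-$K_4$ or an odd-$K_3^2$ as a subgraph: otherwise $G$ maps to its shortest odd-cycle, which has length at least $7$ and hence maps to $C_5$ (since $C_{2m+1}\to C_5$ for all $m\ge 2$), forcing $G\to C_5$. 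An odd-$K_3^2$ of odd-girth at least $7$ has order at least $6\cdot 3=18>14$, so $G$ cannot contain one. Therefore $G$ contains an odd-$K_4$ $K$, which by Proposition~\ref{prop:oddK4} has order at least $12$; consequently at most two vertices of $G$ lie outside $K$.

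The first task is the case $G=K$, that is, $G$ is exactly an odd-$K_4$ of order between $12$ and $14$. The possible cycle-length multisets are very restricted: the four triangles are odd and at least $7$, and their lengths sum to $2|E(K)|=2(|V(K)|+2)$, so the only possibilities are the four $7$-odd-$K_4$'s of triangle type $(1,1,5),(1,2,4),(1,3,3),(2,2,3)$ (order $12$, all four triangles of length $7$), the multiset $(7,7,7,9)$ (order $13$), and the multisets $(7,7,9,9)$ and $(7,7,7,11)$ (order $14$). For each I would exhibit an explicit homomorphism to $C_5$ by first colouring the four branch vertices in $\mathbb{Z}_5$ and then realising each thread: a thread of length $\ell$ between colours at cyclic distance $d$ can be filled precisely when $\ell\ge d$ with $\ell\equiv d\pmod 2$, or $\ell\ge 5-d$ with $\ell\equiv d+1\pmod 2$. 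For example, the $(2,2,3)$-type maps with branch colours $0,2,2,0$, and the other types are equally direct; to keep the bookkeeping uniform one may instead map $K$ to the circular clique $C(12,5)$ and compose with the homomorphism $C(12,5)\to C_5$ recorded in Section~\ref{sec:prelim}.

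The crux is the case where $G$ carries one or two vertices outside $K$, or extra edges inside $V(K)$. The odd-girth hypothesis severely restricts how such a vertex $z$ can attach: if $z$ has neighbours $x,y\in K$, then the shortest cycle through $z$, $x$ and $y$ has length $d_K(x,y)+2$, so whenever this number is odd it must be at least $7$, i.e. $d_K(x,y)\ge 5$. Thus the attachment points of $z$ are pairwise far apart in $K$, which leaves only finitely many configurations given that $|V(G)\setminus V(K)|\le 2$. The plan is to establish a \emph{flexibility lemma} for the $C_5$-colourings of $K$ — that the pair (or triple) of colours appearing on any prescribed small set of vertices of $K$ can be chosen within a sufficiently rich set — and then, for each attachment pattern, to exhibit a colour for $z$ (and the second extra vertex, if present) by choosing the colouring of $K$ accordingly.

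The step I expect to dominate the work is exactly this control of the colouring space of $K$: unlike the bare odd-$K_4$, here a single colouring must simultaneously satisfy all thread constraints of $K$ \emph{and} leave an admissible colour at every attached vertex, and it is the residual rigidity — when the forced far-apart attachments collide with the parity constraints of the threads — that must be ruled out in every configuration. This is precisely what pins the threshold at $15$ rather than lower, since the order-$15$ graphs of Figure~\ref{fig:2graphsOrder15} realise exactly such an unavoidable collision. Once every configuration is shown to admit an extension, no counterexample of order at most $14$ can exist, so every such graph maps to $C_5$; together with the examples of order $15$ this gives $\eta(3,2)=15$.
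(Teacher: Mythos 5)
Your endgame skeleton is the same as the paper's: minimal counterexample, passing to the core, Theorem~\ref{thm:gerards}, ruling out an odd-$K_3^2$ by its order, and concluding that $G$ is an odd-$K_4$ $K$ on $12$--$14$ vertices together with at most two outside vertices. But in the paper this reduction is only the \emph{final} step, and it is workable only because a long sequence of structural claims about the minimal counterexample has been established first: no thread of length~$4$ or more (Claim~\ref{clm:nothread}), a $5$-walk between any two vertices (Claim~\ref{clm:5walk}), no vertex of degree~$4$ or more nor of degree~$3$ with second neighbourhood of size~$5$ or more (Claim~\ref{clm:(<=3,<=4)}), and crucially no $6$-cycle and no $4$-cycle (Claims~\ref{clm:no6-cycle} and~\ref{clm:no4-cycle}), all proved via the ``$v$-special colouring'' extension device of Claim~\ref{clm:nospecial}. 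Your proposal skips all of this, and the one structural statement you assert in its place is false: you claim that the attachment points of an outside vertex must be pairwise at distance at least~$5$ in $K$, but the odd-girth hypothesis constrains only \emph{odd} paths between attachment points. A new vertex joined to two vertices of $K$ at distance~$2$ or~$4$ creates a $4$-cycle or a $6$-cycle, which is perfectly compatible with odd-girth at least~$7$; eliminating precisely these configurations is the content of Claims~\ref{clm:no4-cycle} and~\ref{clm:no6-cycle}, each of which costs the paper about a page of argument (the core property alone kills only the most degenerate instances). Without those claims your configuration space is much larger than ``finitely many far-apart attachments'': it includes even-distance attachments, extra edges inside $V(K)$, and more odd-$K_4$ thread-types than the paper ever needs to colour (for order~$13$, for instance, the type with $(a',b',c')=(1,1,4)$, which contains a $6$-cycle and is excluded in the paper only via Claim~\ref{clm:no6-cycle}).

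The second, and decisive, gap is the one you flag yourself: the ``flexibility lemma'' describing which colour patterns the $C_5$-colourings of $K$ can realise on prescribed attachment sets is never stated, let alone proved, and the configuration-by-configuration extension argument is never carried out. That is not a routine verification to be deferred --- it is exactly where the paper spends its effort (the claims above plus the explicit colourings of Figures~\ref{fig:case1}, \ref{fig:case2}, \ref{fig:case3a} and~\ref{fig:case3b}), and, as you correctly observe, it is where the obstruction that pins the answer at $15$ actually lives. As it stands, your text is a sound plan for the reduction plus a placeholder for the body of the proof; the body is missing, and the one quantitative shortcut offered in its place does not hold.
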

\begin{proof}
We consider a $C_5$ on the vertex set $\{0,1,2,3,4\}$  where vertex $i$ is adjacent to vertices $i+1$ and $i-1$ (modulo $5$). Thus, in the following, to give a $C_5$-colouring
we will give a colouring using elements of $\{0,1,2,3,4\}$ where adjacent pairs are mapped into (cyclically) consecutive elements of this set.

Given a graph $G$ and a vertex $v$ of it, we partition $V(G)$ into four sets
$\{v\}, N_1(v), N_2(v)$ and $\N3(v)$ where $N_1(v)$ (respectively $N_2(v)$) 
designates the set of vertices at distance exactly~1 (respectively~2) of $v$, 
and $\N3(v)$ the vertices at distance~3 or more of $v$. Note that because of the odd-girth requirement for $G$, $N_1(v)$ and $N_2(v)$ are independent sets.

A proper 3-colouring of $G[\N3(v)]$ using colours $c_1, c_2$ and $c_3$ is said to be \emph{$v$-special} if:
	\begin{itemize}
		\item[(i)] each vertex with colour $c_3$ is an isolated vertex of $G[\N3(v)]$, 
		\item[(ii)] no vertex from $N_2(v)$ sees both colours $c_1$ and $c_2$.
	\end{itemize}
	
	A key observation is the following: given any graph $G$, if for some
	vertex $v$ of $G$, there exists a $v$-special colouring of $G[\N3(v)]$,
	then $G$ maps to $C_5$. Such a homomorphism is given by
	mapping $c_1$-vertices to~0, $c_2$-vertices to~1 and $c_3$-vertices
	to~3, and then extending as follows:
	\begin{itemize}
		\item for any vertex $u$ in $N_2(v)$, if $u$ has a $c_1$-neighbour,
		map it to~4; otherwise, map it to~2,
		\item all vertices of $N_1(v)$ are mapped to~3,
		\item vertex $v$ is mapped to~2 or~4.
	\end{itemize}
	
	Now, let $G$ be a minimal counterexample to Theorem~\ref{thm:15}: $G$ has odd-girth at least~$7$, has order at most~$14$, but does not map to $C_5$. Note that $G$ is connected, indeed $G$ does not map to $C_5$ if and only if one of its components does not map to $C_5$.

        We
	first collect a few properties of $G$. The previous paragraph allows
	us to state our first claim.
	
	\begin{claim}
		\label{clm:nospecial}
		For no vertex $v$ of $G$ there is a $v$-special colouring of $G[\N3(v)]$.
	\end{claim}
	
	Since $G$ is a minimal counterexample, it cannot map to a subgraph of itself (which would be a smaller counterexample):
        
	\begin{claim}
		\label{clm:core}\label{clm:diffneighbours}
		Graph $G$ is a core. In particular, for any two vertices $u$ and $v$ of $G$, $N(u) \not\subseteq N(v)$.
	\end{claim}

        Recall that a \emph{walk} between two vertices $u$ and $v$ is a sequence of (not necessarily distinct) vertices starting with $u$ and ending with $v$, where two consecutve vertices in the sequence are adjacent. A walk between $u$ and $v$ is a \emph{$uv$-walk}, and a \emph{$k$-walk} is a walk with $k+1$ vertices counting multiplicity.
	
	\begin{claim}
		\label{clm:5walk}
		For any two distinct vertices $u$ and $v$ of $G$, there is a $uv$-walk of length~5.
	\end{claim}
	\claimproof If not, identifying $u$ and $v$ would result in a
	smaller graph of yet odd-girth~7 which does not map to $C_5$, contradicting the minimality of $G$.  \smallqed
	
	\begin{claim}
		\label{clm:4threads}\label{clm:nothread}
		Graph $G$ has no thread of length~4 or more.
	\end{claim}
	\claimproof Once again, by minimality of $G$, if we remove a
	thread of length~4, the resulting graph maps to $C_5$. But
	since there is a walk of length~4 between any two vertices of $C_5$,
	this mapping could easily be extended to $G$. \smallqed\medskip

        Now we can state a more difficult claim.
	
	\begin{claim}
		\label{clm:(<=3,<=4)}
		There is no vertex of $G$ of degree~4 or more, nor a vertex of degree exactly~3 with
		a second neighbourhood of size~5 or more.
	\end{claim}
	\claimproof For a contradiction, suppose that a vertex $v$ has degree~4 or
	more, or has degree~3 and a second neighbourhood of size~5 or more. 
	
	By Claim~\ref{clm:diffneighbours}, the neighbours of $v$ should have
	pairwise distinct neighbourhoods, so that even if $v$ has degree~$4$ or
	more, we must have $|N_2(v)| \geq 4$. Thus, by a counting argument
	(recall that $G$ has at most 14 vertices),
	$\N3(v)$ has size at most~5. Since $G$ has odd-girth~7, this means
	$G[\N3(v)]$ is bipartite. 
	
	Suppose $G[\N3(v)]$ has at most one non-trivial connected
	component. Consider any proper $3$-colouring of $G[\N3(v)]$ such that
	colours $c_1$ and $c_2$ are used for the non-trivial connected
	component, and colour $c_3$ is used for isolated vertices. Then, no
	vertex of $N_2(v)$ can see both colours $c_1$ and $c_2$, as this would
	result in a short odd-cycle in $G$. Thus, any such colouring is
	$v$-special. Hence, by Claim~\ref{clm:nospecial}, we derive that $G[\N3(v)]$ has at least two
	non-trivial components. Since it has order at most~5, it must have
	exactly two.
	
	Assume now that both non-trivial connected components are isomorphic
	to $K_2$. Consider all proper $3$-colourings of $G[\N3(v)]$ such that
	colours $c_1$ and $c_2$ are used for the copies of $K_2$ (the
	potentially remaining vertex being coloured with $c_3$). Considering Claim~\ref{clm:nospecial}, one may easily check that if none of these colorings is $v$-special, then there is a short odd-cycle in
	$G$, which is a contradiction.
	
	Hence, $G[\N3(v)]$ is isomorphic to the disjoint union of
        $K_2$ and $K_{2,1}$. Let $u_1$ and $u_2$ be the vertices of
        $K_2$, and $u_3, u_4$ and $u_5$ be the vertices of $K_{2,1}$
        such that $u_4$ is the central vertex. 
	
	Let $\vphi_1$ and $\vphi_2$ be two proper 2-colourings of $G[\N3(v)]$
	as follows: (i) $\vphi_1(u_1) = \vphi_1(u_3) = \vphi_1(u_5) = c_1$ and
	 $\vphi_1(u_2) = \vphi_1(u_4) = c_2$, and (ii) $\vphi_2(u_2) = \vphi_2(u_3) =
	\vphi_2(u_5) = c_1$ and $\vphi_2(u_1) = \vphi_2(u_4) = c_2$ (see Figure~\ref{fig:2-colorings}).

\begin{figure}[!htpb]
\begin{center}
\subfigure[Colouring $\vphi_1$]{\scalebox{1}{\begin{tikzpicture}[join=bevel,inner sep=0.5mm,scale=0.7]
      \node[draw,shape=circle](u1) at (0,0) {\tiny$u_1$};\draw (u1) node[above=5mm] {$c_1$};  
      \node[draw,shape=circle](u2) at (2,0) {\tiny$u_2$}; \draw (u2) node[above=5mm] {$c_2$}; 
      \node[draw,shape=circle](u3) at (4,0) {\tiny$u_3$};\draw (u3) node[above=5mm] {$c_1$};
      \node[draw,shape=circle](u4) at (6,0) {\tiny$u_4$}; \draw (u4) node[above=5mm] {$c_2$};
      \node[draw,shape=circle](u5) at (8,0) {\tiny$u_5$}; \draw (u5) node[above=5mm] {$c_1$};     

\draw[line width=1pt] (u1)--(u2) (u3)--(u4)--(u5);

\end{tikzpicture}}}\qquad\qquad\qquad
\subfigure[Colouring $\vphi_2$]{\scalebox{1}{\begin{tikzpicture}[join=bevel,inner sep=0.5mm,scale=0.7]

      \node[draw,shape=circle](u1) at (0,0) {\tiny $u_1$};\draw (u1) node[above=5mm] {$c_2$};  
      \node[draw,shape=circle](u2) at (2,0) {\tiny$u_2$}; \draw (u2) node[above=5mm] {$c_1$}; 
      \node[draw,shape=circle](u3) at (4,0) {\tiny$u_3$};\draw (u3) node[above=5mm] {$c_1$};
      \node[draw,shape=circle](u4) at (6,0) {\tiny$u_4$}; \draw (u4) node[above=5mm] {$c_2$};
      \node[draw,shape=circle](u5) at (8,0) {\tiny$u_5$}; \draw (u5) node[above=5mm] {$c_1$};     

\draw[line width=1pt] (u1)--(u2) (u3)--(u4)--(u5);
\end{tikzpicture}}}
\end{center}
\caption{The two $2$-colorings of $\N3(v)$ in the proof of Claim~\ref{clm:(<=3,<=4)}.}
\label{fig:2-colorings}
\end{figure}

Since $\vphi_1$ is not $v$-special, there is either a vertex $t_1$ adjacent	to $u_2$ and $u_3$ (by considering the symmetry of $u_3$ and $u_5$), or a vertex $t_1'$ adjacent to $u_1$ and $u_4$. Similarly, since $\vphi_2$ is not $v$-special, either there is a vertex $t_2$ adjacent to $u_1$	and one to $u_3$ and $u_5$, or there is a vertex $t_2'$ adjacent to $u_2$ and $u_4$. If $t_1'$ exists, then in the subgraph induced by $t_1'$, $t_2$ or $t_2'$, whichever exists, and their neighbors we will find an odd-cycle of length at most 5. Similarly, $t_2'$ does not exist. Thus, $t_1$ and $t_2$ must exist. Moreover, for a similar reason, $t_2$ is not a a neighbour of $u_3$ and, therefor, it is a neighbor of $u_5$.
	
	Next, we show that $u_4$ has degree at least~3. Suppose not,
        then it has degree exactly~2. Let $\vphi_3$ be a partial
        $C_5$-colouring of $G$ defined as follows: $\vphi_3(u_1) =
        \vphi_3(u_5) = 0, \vphi_3(u_2) = 1, \vphi_3(u_3) = 3$ and
        $\vphi_3(u_4) = 4$. Then, no vertex in $N_2(v)$ sees both~0
        and~1 (by odd-girth arguments). Thus, we can extend $\vphi_3$
        to $N_2(v)$ using only colours~2 and~4 on these
        vertices. Then, all vertices of $N_1(v)$ can be mapped to~3
        and $v$ can be mapped to~2 and $G\to C_5$, a contradiction.
	
	Hence, there exists a vertex $t_3$ in $N_2(v)$ which is
        adjacent to $u_4$. Note that, by the odd-girth condition,
        $t_3$ has no other neighbour in $G[\N3(v)]$ and, in
        particular, it must be distinct from $t_1$ and $t_2$. Vertices
        $t_1, t_2$ and $t_3$ are in $N_2(v)$, so there are vertices
        $s_1$, $s_2$ and $s_3$ in $N_1(v)$ such that $s_i$ is adjacent
        to $t_i$ for $i$ between~1 and~3. Moreover, vertices $t_1,
        t_2$ and $t_3$ are pairwise connected by a path of
        length~3. Therefore, their neighbourhoods cannot intersect, so
        that vertices $s_1, s_2$ and $s_3$ are distinct.
	
	Now, consider the partial $C_5$-colouring $\vphi_3$ again. We may
	extend $\vphi_3$ to $N_2(v)$ by assigning colour~0 to neighbours of
	$u_4$, colour~4 to neighbours of $u_1$ and $u_5$, and colour~2 to the
	rest. If no vertex of $N_1(v)$ sees both colours~0 and~4 in $N_2(v)$,
	we may colour $N_1(v)$ with~1 and~3 and colour $v$ with~2, which is a
	contradiction. Thus, there exists some vertex $x$ in $N_1(v)$ seeing
	both colours~0 and~4 in $N_2(v)$. The only vertices with colour~0 in
	$N_2(v)$ are neighbours of $u_4$ so that $x$ must be at distance~2
	from $u_4$. Since there is no short odd-cycle in $G$, vertex $x$
	cannot be at distance~2 from $u_5$. Thus, it is at distance~2 from
	$u_1$. Let $t_4$ be the middle vertex of this path from $x$ to
	$u_1$. Now $t_4$ is a neighbour of $u_1$ which is distinct from $t_1,
	t_2$ and $t_3$. By the symmetry between $u_1$ and $u_2$, there must be a
	fifth vertex $t_5$ in $N_2(v)$ which is a neighbour of $u_2$ and
	distinct from $t_1,t_2,t_3$ and $t_4$. Moreover, $t_5$ has a neighbour
	$y$ in $N_1(v)$ that is at distance~2 from $u_4$. We can readily check
	that $y$ is distinct from $x, s_1$ and $s_2$. Thus, $N_1(v)$ has size
	at least~4 and $N_2(v)$ has size at least~5, which is a contradiction
	with the order of $G$ (which should be at most 14). This concludes the
	proof of Claim~\ref{clm:(<=3,<=4)}. \smallqed

%Let $V(C_5)=\{0,1,2,3,4\}$ where the numbering follows the cyclic order. By contradiction, we consider a counterexample $G$ of minimum order. % that also minimizes the size. 
%In particular, $G$ does not map to a smaller graph of odd-girth~$7$ and $G$ is a core (hence has minimum degree~$2$). Then, there is a $5$-walk connecting each pair of vertices of $G$.

\begin{claim}\label{clm:no6-cycle}
$G$ contains no $6$-cycle.
\end{claim}
\claimproof Suppose, by contradiction, that $G$ contains a $6$-cycle $C:v_0,\ldots, v_{5}$. For a pair $v_i$ and $v_{i+2}$ (addition in indices is done modulo $6$) of vertices of $C$, the $5$-walk connecting them (see Claim~\ref{clm:5walk}) is necessarily a $5$-path because of the odd-girth condition, and we denote it by $P^i$. Furthermore, at most one inner-vertex of $P^i$ may belong to $C$, and if it does, it must be a neighbour of $v_i$ or $v_{i+2}$ (one can check that otherwise, there is a short odd-cycle in $G$).

Assume first that none of the six paths $P^i$ ($0\leq i\leq 5$) has any inner-vertex on $C$. In this case, by Claim~\ref{clm:(<=3,<=4)}, we observe that the neighbours of $v_i$ in $P^i$ and $P^{i+4}$ (addition in superscripts is done modulo $6$) are the same. Let $v_i'$ be this neighbour of $v_i$.

Vertices $v'_i$, $i=0,1, \ldots, 5$ are all distinct, as otherwise we have a short odd-cycle in $G$. Let $x$ and $y$ be the two internal vertices of $P^0$ 
distinct from  $v'_0$ and $v'_2$. By our assumption, $x$ and $y$ are distinct from vertices of $C$. We claim that they are also distinct from $v_i'$, $i=0, \ldots, 5$. Vertex $x$ is indeed distinct from $v_0'$ and $v_2'$ by assumption. It is distinct from $v_1',v_3'$ and $v_4'$ as otherwise there will be a short odd-cycle. By the symmetry of $x$ and $y$ and by the same arguments, $y$ is distinct from $v_0', v_1', v_2', v_4', v_5'$. Moreover, we cannot simultaneously have $x=v_5'$ and $y=v_3'$, otherwise we get a $5$-cycle. Finally, if we have $x=v_5'$ then $\{v_0', v_1, y, v_3, v_4'\}\subseteq N_2(v_5)$ and $d(v_5)\geq 3$, contradicting Claim~\ref{clm:(<=3,<=4)}.
As a result, since $|V(G)|\leq 14$, $x$ and $y$ are internal vertices of all $P^i$'s. But then it easy to find a short odd-cycle.

Hence, we may assume that some path $P^i$, say $P^1$ without loss of generality, has (exactly) one inner-vertex on $C$. Without loss of generality, say $v'_1=v_0$ (recall that it is not possible that a vertex of $P^1$ that is not a neighbour of $v_1$ or $v_3$ is on $C$). Let $v_0x_1x_2x_3v_3$ be the $4$-path connecting $v_0$ and $v_3$ (recall that at most one inner-vertex of $P^i$ belongs to $C$, so $x_i\notin C$ for $i=1,2,3$). We next assume that $P^5$ does not have any inner-vertex in $C$. Then, no vertex of $P^5$ is a vertex from $\{x_1,x_2,x_3\}$, for otherwise we have a short odd-cycle in $G$. But then, $v_0$ violates Claim~\ref{clm:(<=3,<=4)} as $\{v_5',v_4,x_2,v_2,v_1'\}\subseteq N_2(v_0)$. Therefore, an inner-vertex of $P^5$ lies on $C$. Considering the odd-girth conditions and Claim~\ref{clm:(<=3,<=4)}, either $v_5'=v_4$, or the neighbour of $v_1$ on $P^5$ is $v_2$. As both cases are symmetric with respect to our assumptions so far, we may assume that the second case holds without loss of generality. Thus, we have $P^5:v_5y_1y_2y_3v_2v_1$. Then, $P^1$ and $P^5$ are vertex-disjoint, for otherwise we have a short odd-cycle in $G$.

Let us call $G'$, the $12$-vertex subgraph induced by the vertices of $C$, $P^1$ and $P^5$. Notice that $G'$ also contains $P^0$, $P^2$, $P^3$ and $P^4$. Moreover, the vertices and edges of $C$, $P^1$ and $P^5$ form a $7$-odd $K_4$, more precisely, a $(1,2,4)$-odd-$K_4$. Now, because of the odd-girth of $G$, and by Claim~\ref{clm:(<=3,<=4)}, one can check that the only possible third neighbour of $v_1$ in $G'$, if any, is $v_4$ (and vice-versa). Furthermore, within the set $\{x_1,x_2,x_3,y_1,y_2,y_3\}$ of vertices, the only possible edges are those of the form $x_iy_i$ with $i=1,2,3$. (See Figure~\ref{fig:clm:no-6cycle}(i) for an illustration of $G'$.) We claim that $G'$ must be a subgraph of $C(12,5)$. Indeed, even if these four optional edges are present, we obtain an isomorphic copy of $C(12,5)$: consider the isomorphism from $C(12,5)$ to this graph, given by $\{0:v_0,1:v_4,2:v_2,3:v_3,4:y_2,5:x_1,6:v_5,7:v_1,8:v_3,9:y_3,10:x_2,11:y_1\}$. (See Figure~\ref{fig:clm:no-6cycle}(ii) for an illustration of $C(12,5)$.) Thus, the circular chromatic number of $G'$ is at most $12/5$, implying that $G'$ has a homomorphism to $C_5$.

\begin{figure}[!htpb]
\centering
\subfigure[The subgraph $G'$ of $G$. Dashed edges may or may not exist.]{\scalebox{0.8}{\begin{tikzpicture}[join=bevel,inner sep=0.6mm,scale=0.7]

\node[draw,shape=circle] (v2) at (30:4) {\tiny${v_2}$};
\node[draw,shape=circle] (v1) at (90:4) {\tiny${v_1}$};
\node[draw,shape=circle] (v0) at (150:4) {\tiny$v_0$};
\node[draw,shape=circle] (v5) at (210:4) {\tiny$v_5$};
\node[draw,shape=circle] (v4) at (270:4) {\tiny$v_4$};
\node[draw,shape=circle] (v3) at (330:4) {\tiny$v_3$};

\node[draw,shape=circle] at (-1.5,1) (x1) {\tiny$x_1$};
\node[draw,shape=circle] at (0,1) (x2) {\tiny$x_2$};
\node[draw,shape=circle] at (1.5,1) (x3) {\tiny$x_3$};
\node[draw,shape=circle] at (-1.5,-1) (y1) {\tiny$y_1$};
\node[draw,shape=circle] at (0,-1) (y2) {\tiny$y_2$};
\node[draw,shape=circle] at (1.5,-1) (y3) {\tiny$y_3$};

\draw[-,line width=0.4mm] (v0)--(v1)--(v2)--(v3)--(v4)--(v5)--(v0)--(x1)--(x2)--(x3)--(v3);
\draw[-,line width=0.4mm] (v5)--(y1)--(y2)--(y3)--(v2);
\draw[dashed,line width=0.4mm] (x1)--(y1) (x2)--(y2) (x3)--(y3) (v1) .. controls +(-7,0) and +(-7,0) .. (v4);

\end{tikzpicture}}}\qquad
\subfigure[The graph $C(12,5)$.]{\scalebox{0.8}{\begin{tikzpicture}[join=bevel,inner sep=0.6mm]

\node[draw,shape=circle] (8) at (0,-.2) {\small{8}};
\node[draw,shape=circle] (1) at (1.5,-.2) {\small{1}};
\node[draw,shape=circle] (6) at (3,-.2) {\small 6};
\node[draw,shape=circle] (11) at (4.5,-.2) {\tiny{11}};
\node[draw,shape=circle] (4) at (6,-.2) {\small 4};
\node[draw,shape=circle] (9) at (7.5,-.2) {\small 9};
\node[draw,shape=circle] (2) at (0,2.5) {\small 2};
\node[draw,shape=circle] (7) at (1.5,2.5) {\small 7};
\node[draw,shape=circle] (0) at (3,2.5) {\small 0};
\node[draw,shape=circle] (5) at (4.5,2.5) {\small 5};
\node[draw,shape=circle] (10) at (6,2.5) {\tiny{10}};
\node[draw,shape=circle] (3) at (7.5,2.5) {\small 3};

\path (0,-1.8) node (x) {};

\draw[-,line width=0.4mm] (2)--(7)--(0)--(5)--(10)--(3)--(9)--(2)
                               (2)--(8)--(3) (8)--(1)--(7) (1)--(6)--(0)
                               (6)--(11)--(5) (11)--(4)--(10) (4)--(9);
      \end{tikzpicture}}}
\caption{Graphs used in the proof of Claim~\ref{clm:no6-cycle}.}
\label{fig:clm:no-6cycle}
\end{figure}

Hence, there is at least one vertex in $G$, that is not a vertex of $G'$. Using Claim~\ref{clm:(<=3,<=4)}, one can check that any vertex of $G$ not in $G'$ can be adjacent only to $x_2$ or to $y_2$ (or both) and that there are at most two such vertices, one a neighbor of $x_2$ another a neighbor of $y_2$. Without loss of generality (considering the symmetries of the graph), assume that $x_2$ has an additional neighbour, $v$. Then, either $v$ is also adjacent to $y_2$ (then $G$ has order~$13$), or $v$ and $y_2$ have a common neighbour, say $w$. If $v$ is adjacent to both $x_2$ and $y_2$, then the only edges induced by the set $\{x_1,x_2,x_3,y_1,y_2,y_3\}$ are edges in $G'$ (otherwise we have a short odd-cycle). But then, we exhibit a homomorphism of $G$ to $C_5$: map $x_3$, $y_1$, $v$ to $0$; $x_2$, $y_2$ to $1$; $v_1$, $x_1$, $y_3$ to $2$; $v_0$, $v_2$, $v_4$ to $3$; $v_3$ and $v_5$ to $4$. This is a contradiction. Thus, $v$ and $y_2$ have a common neighbour $w$, and both $v$ and $w$ are of degree~$2$. We now create a homomorphic image of $G$ by identifying $v$ with $y_2$ and $w$ with $x_2$. Then, this image of $G$ is again a subgraph of $C(12,5)$, and thus the circular chromatic number of $G$ is at most $12/5$, implying that $G$ has a homomorphism to $C_5$, a contradiction. This completes the proof of Claim~\ref{clm:no6-cycle}.
%In the latter case, we map $v_0$, $v_4$, $y_1$ to $0$; $v_5$, $x_1$, $y_2$, $w$ to $1$; $x_2$, $y_3$, $v$ to $2$; $v_2$ and $x_3$ to $3$; $v_1$ to $4$. This is a contradiction and the proof of Claim~\ref{clm:no6-cycle} is complete.
\smallqed\medskip

\begin{claim}\label{clm:no4-cycle}
$G$ contains no $4$-cycle.
\end{claim}
\claimproof Assume by contradiction that $G$ contains a $4$-cycle $C:tuvw$. As in the proof of Claim~\ref{clm:no6-cycle}, there must be two $5$-paths $P_{tv}:ta_1a_2a_3a_4v$ and $P_{uw}:ub_1b_2b_3b_4w$ connecting $t$ with $v$ and $u$ with $w$, respectively. Moreover, these two paths must be vertex-disjoint because of the odd-girth of $G$. Thus, the union of $C$, $P_{tv}$ and $P_{uw}$ forms a $(1,1,5)$-odd-$K_4$, $K$. By assumption on the odd-girth of $G$, any additional edge inside $V(K)$ must connect an internal vertex of $P_{tv}$ to an internal vertex of $P_{uw}$. But any such edge would either create a short odd-cycle or a $6$-cycle in $G$, the latter contradicting Claim~\ref{clm:no6-cycle}. Thus, the only edges in $V(K)$ are those of $K$. If there is no additional vertex in $G$, we have two $5$-threads in $G$, contradicting Claim~\ref{clm:nothread}; thus there is at least one additional vertex in $G$, say $x$, and perhaps a last vertex, $y$. Note that $t$, $u$, $v$ and $w$ are already, in $K$, degree~$3$-vertices with a second neighbourhood of size $4$, thus by Claim~\ref{clm:(<=3,<=4)} none of $t$, $u$, $v$, $w$, $a_1$, $a_4$, $b_1$ and $b_4$ are adjacent to any vertex not in $K$. Thus $N(x), N(y) \subseteq \{a_2, a_3, b_2, b_3, x, y\}$.
 
Assume that some vertex not in $K$ (say $x$) is adjacent to two vertices of $K$. Then, these two vertices must be a vertex on the $tu$-path of $K$ ($a_2$ or $a_3$, without loss of generality it is $a_2$) and a vertex on the $vw$-path of $K$ ($b_2$ or $b_3$). By the automorphism of $K$ that swaps $v$ and $w$ and reverses the $vw$-path, without loss of generality we can assume that the second neighbour of $x$ in $K$ is $b_3$. Then, we claim that $G$ has no further vertex. Indeed, if there is a last vertex $y$, since $a_2$ and $b_3$ have already three neighbours, $y$ must be adjacent to at least two vertices among $\{a_3,b_2,x\}$. If it is adjacent to both $a_3$ and $b_2$, we have a $6$-cycle, contradicting Claim~\ref{clm:no6-cycle}; otherwise, $y$ is of degree~$2$ but part of a $4$-cycle, implying that $G$ is not a core, a contradiction. Thus, $G$ has order~$13$ and no further edge. We create a homomorphic image of $G$ by identifying $x$ and $a_3$. The obtained graph is a subgraph of $C(12,5)$. Hence, $G$ has circular chromatic number at most $12/5$ and a homomorphism to $C_5$, a contradiction.

Thus, any vertex not in $K$ has at most one neighbour in $K$. Since $G$ has no $4$-thread, one vertex not in $K$ (say $x$) is adjacent to one of $a_2$ or $a_3$ (without loss of generality, say $a_2$), and the last vertex, $y$, is adjacent to one of $b_2$ and $b_3$ (as before, by the symmetries of $K$ we can assume it is $b_2$). Moreover, $x$ and $y$ must be adjacent, otherwise they both have degree~$1$. Also there is no further edge in $G$. But then, as before, we create a homomorphic image of $G$ by identifying $x$ with $b_2$ and $y$ with $a_2$. The resulting graph is a subgraph of $C(12,5)$, which again gives a contradiction. This completes the proof of Claim~\ref{clm:no4-cycle}. \smallqed\medskip

Claims~\ref{clm:no6-cycle} and~\ref{clm:no4-cycle} imply that $G$ has girth at least~$7$. To complete the proof, we note that since $G$ has no homomorphism to $C_5$, it also has no homomorphism to $C_7$. Thus, by Theorem~\ref{thm:gerards}, $G$ must contain either an odd-$K_3^2$ or an odd-$K_4$ of odd-girth at least~$7$. Since such an odd-$K_3^2$ must have at least $18$ vertices, $G$ contains an odd-$K_4$. Let $H$ be such an odd-$K_4$ of $G$. Since in fact its \emph{girth} is at least~$7$, by Proposition~\ref{prop:oddK4} it has at least $12$ vertices. We consider three cases, depending on the order of $H$.

\paragraph{Case 1.} Assume that $|V(H)|=12$. By Proposition~\ref{prop:oddK4}, $H$ is a $7$-odd-$K_4$, in fact, it is an $(a,b,c)$-odd-$K_4$ with $a+b+c=7$. Since $H$ has girth at least~$7$, we have only two possibilities: (i) $(a,b,c)=(2,2,3)$ or (ii) $(a,b,c)=(1,3,3)$. 
Each of these two possible odd-$K_4$'s is of diameter~$4$. Furthermore, in each case, the pairs at distance~$4$ are isomorphic (each pair consists of two internal vertices of two disjoint threads of length~$3$).
If $G$ contains one more vertex than $H$, say $x$, then either $x$ is of degree at most 1, which contradicts Claim~\ref{clm:core} or it creates a cycle of length at most~6, which is not possible due to the girth of $G$. Thus $G$ has exactly two more vertices, say $x$ and $y$ and each of them is adjacent to at most one vertex in $H$. These two neighbours of $x$ and $y$ are at distance~4 in $H$ and $x$ is adjacent to $y$. 
There are only two non-isomorphic such possibilities.  A homomorphism to $C_5$ in each of these two possibilities is given in Figure~\ref{fig:case1}.

\begin{figure}[!htpb]
\centering
\subfigure[$(a,b,c)=(2,2,3)$]{\scalebox{0.8}{\begin{tikzpicture}[join=bevel,inner sep=0.6mm]
%main vertices
\node[draw,shape=circle,fill] (x) at (0,0) {};
\draw (x) node[left=0.2cm] {$2$};
\node[draw,shape=circle,fill] (y) at (6,0) {};
\draw (y) node[right=0.2cm] {$0$};
\node[draw,shape=circle,fill] (z) at (3,5.2) {};
\draw (z) node[above=0.2cm] {$2$};
\node[draw,shape=circle,fill] (t) at (3,1.7) {};
\draw (t) node[above right=0.1cm] {$0$};
%inner-vertices on a-side
\node[draw,shape=circle,fill] (a1) at (3,3.4) {};
\draw (a1) node[left=0.2cm] {$1$};
%inner-vertices on a'-side
\node[draw,shape=circle,fill] (ap1) at (3,0) {};
\draw (ap1) node[above=0.2cm] {$1$};
%inner-vertices on b-side
\node[draw,shape=circle,fill] (b1) at (1.5,2.6) {};
\draw (b1) node[left=0.2cm] {$3$};
%inner-vertices on b'-side
\node[draw,shape=circle,fill] (bp1) at (4.5,0.85) {};
\draw (bp1) node[above=0.2cm] {$1$};
%inner-vertices on c-side
\node[draw,shape=circle,fill] (c1) at (1,0.57) {};
\draw (c1) node[above=0.2cm] {$3$};
\node[draw,shape=circle,fill] (c2) at (2,1.13) {};
\draw (c2) node[above=0.2cm] {$4$};
%inner-vertices on c'-side
\node[draw,shape=circle,fill] (cp1) at (4,3.5) {};
\draw (cp1) node[right=0.2cm] {$3$};
\node[draw,shape=circle,fill] (cp2) at (5,1.75) {};
\draw (cp2) node[right=0.2cm] {$4$};
%extra vertices
\node[draw,shape=circle,fill] (d1) at (1.5,-1) {};
\draw (d1) node[left=0.2cm] {$4$};
\node[draw,shape=circle,fill] (d2) at (4.5,-1) {};
\draw (d2) node[right=0.2cm] {$3$};

\draw[-,line width=0.4mm] (x)--(y)--(z)--(x)--(t)--(y)  (t)--(z) (c1)--(d1)--(d2)--(cp2);
\end{tikzpicture}}}\qquad
\subfigure[$(a,b,c)=(1,3,3)$]{\scalebox{0.8}{\begin{tikzpicture}[join=bevel,inner sep=0.6mm]
%main vertices
\node[draw,shape=circle,fill] (x) at (0,0) {};
\draw (x) node[left=0.2cm] {$0$};
\node[draw,shape=circle,fill] (y) at (6,0) {};
\draw (y) node[right=0.2cm] {$4$};
\node[draw,shape=circle,fill] (z) at (3,5.2) {};
\draw (z) node[above=0.2cm] {$2$};
\node[draw,shape=circle,fill] (t) at (3,1.7) {};
\draw (t) node[above right=0.1cm] {$3$};
%inner-vertices on b-side
\node[draw,shape=circle,fill] (b1) at (1,1.7) {};
\draw (b1) node[left=0.2cm] {$4$};
\node[draw,shape=circle,fill] (b2) at (2,3.45) {};
\draw (b2) node[left=0.2cm] {$3$};
%inner-vertices on b'-side
\node[draw,shape=circle,fill] (bp1) at (4,1.13) {};
\draw (bp1) node[above=0.2cm] {$2$};
\node[draw,shape=circle,fill] (bp2) at (5,0.57) {};
\draw (bp2) node[above=0.2cm] {$3$};
%inner-vertices on c-side
\node[draw,shape=circle,fill] (c1) at (1,0.57) {};
\draw (c1) node[above=0.2cm] {$1$};
\node[draw,shape=circle,fill] (c2) at (2,1.13) {};
\draw (c2) node[above=0.2cm] {$2$};
%inner-vertices on c'-side
\node[draw,shape=circle,fill] (cp1) at (4,3.5) {};
\draw (cp1) node[right=0.2cm] {$1$};
\node[draw,shape=circle,fill] (cp2) at (5,1.75) {};
\draw (cp2) node[right=0.2cm] {$0$};
%extra vertices
\node[draw,shape=circle,fill] (d1) at (2.5,-1) {};
\draw (d1) node[left=0.2cm] {$3$};
\node[draw,shape=circle,fill] (d2) at (4.25,-1) {};
\draw (d2) node[right=0.2cm] {$4$};

\draw[-,line width=0.4mm] (x)--(y)--(z)--(x)--(t)--(y)  (t)--(z) (c2)--(d1)--(d2)--(cp2);
\end{tikzpicture}}}\caption{The two graphs considered in Case~$1$.}
\label{fig:case1}
\end{figure}

\paragraph{Case 2.} Assume that $|V(H)|=13$. Let the lengths of the four odd-cycles of $H$ be $l_1$, $l_2$, $l_3$ and $l_4$. Using the fact that $l_1+l_2+l_3+l_4=2|E(H)|=30$ we conclude that three of these cycles are of length~$7$ and the last one is of length~$9$. Let $a$, $b$ and $c$ be the lengths of the three maximal threads of $H$ whose union forms the $9$-cycle. Let $a'$, $b'$ and $c'$ be the respective lengths of the corresponding pairwise disjoint threads.
Then, we have $a'+b'+c=7$ and $a+b+c=9$, which implies that $a'+b'=a+b-2$. Together with the similar equalities $a'+c'=a+c-2$ and $b'+c'=b+c-2$, we obtain that $a=a'+1$, $b=b'+1$ and $c=c'+1$. By the symmetries, we may assume that $a\leq b \leq c$. Considering that there are cycles of lengths $2a'+2b'+2$, $2a'+2c'+2$ and $2b'+2c'+2$ in $H$, we have either (i) $a'=b'=c'=2$ or (ii) $a'=1$, $b'=2$ and $c'=3$. In the first case, the graph is of diameter~$4$ and thus there is no room for a $14$th vertex; in the second case, there is a unique pair at distance~$5$, which leads us to a unique possible graph on $14$ vertices, by connecting a $14$th vertex to these two diametral vertices. Homomorphisms to $C_5$ for each of these graphs are given in Figure~\ref{fig:case2}.

\begin{figure}[!htpb]
\centering
\subfigure[$(a',b',c')=(2,2,2)$]{\scalebox{0.8}{\begin{tikzpicture}[join=bevel,inner sep=0.6mm]
%main vertices
\node[draw,shape=circle,fill] (x) at (0,0) {};
\draw (x) node[left=0.2cm] {$0$};
\node[draw,shape=circle,fill] (y) at (6,0) {};
\draw (y) node[right=0.2cm] {$3$};
\node[draw,shape=circle,fill] (z) at (3,5.2) {};
\draw (z) node[above=0.2cm] {$1$};
\node[draw,shape=circle,fill] (t) at (3,1.7) {};
\draw (t) node[above right=0.1cm] {$3$};
%inner-vertices on a-side
\node[draw,shape=circle,fill] (a1) at (3,4) {};
\draw (a1) node[left=0.2cm] {$0$};
\node[draw,shape=circle,fill] (a2) at (3,2.9) {};
\draw (a2) node[left=0.2cm] {$4$};
%inner-vertices on a'-side
\node[draw,shape=circle,fill] (ap1) at (3,0) {};
\draw (ap1) node[above=0.2cm] {$4$};
%inner-vertices on b-side
\node[draw,shape=circle,fill] (b1) at (1,1.7) {};
\draw (b1) node[left=0.2cm] {$1$};
\node[draw,shape=circle,fill] (b2) at (2,3.45) {};
\draw (b2) node[left=0.2cm] {$0$};
%inner-vertices on b'-side
\node[draw,shape=circle,fill] (bp1) at (4.5,0.85) {};
\draw (bp1) node[above=0.2cm] {$2$};
%inner-vertices on c-side
\node[draw,shape=circle,fill] (c1) at (1,0.57) {};
\draw (c1) node[above=0.2cm] {$1$};
\node[draw,shape=circle,fill] (c2) at (2,1.13) {};
\draw (c2) node[above=0.2cm] {$2$};
%inner-vertices on c'-side
\node[draw,shape=circle,fill] (cp1) at (4.5,2.6) {};
\draw (cp1) node[right=0.2cm] {$2$};

\draw[-,line width=0.4mm] (x)--(y)--(z)--(x)--(t)--(y)  (t)--(z);
\end{tikzpicture}}}\qquad
\subfigure[$(a',b',c')=(1,2,3)$]{\scalebox{0.8}{\begin{tikzpicture}[join=bevel,inner sep=0.6mm]
%main vertices
\node[draw,shape=circle,fill] (x) at (0,0) {};
\draw (x) node[left=0.2cm] {$1$};
\node[draw,shape=circle,fill] (y) at (6,0) {};
\draw (y) node[right=0.2cm] {$0$};
\node[draw,shape=circle,fill] (z) at (3,5.2) {};
\draw (z) node[above=0.2cm] {$2$};
\node[draw,shape=circle,fill] (t) at (3,1.7) {};
\draw (t) node[above right=0.1cm] {$0$};
%inner-vertices on a-side
\node[draw,shape=circle,fill] (a1) at (3,3.4) {};
\draw (a1) node[left=0.2cm] {$1$};
%inner-vertices on b-side
\node[draw,shape=circle,fill] (b1) at (1,1.7) {};
\draw (b1) node[left=0.2cm] {$2$};
\node[draw,shape=circle,fill] (b2) at (2,3.45) {};
\draw (b2) node[left=0.2cm] {$1$};
%inner-vertices on b'-side
\node[draw,shape=circle,fill] (bp1) at (4.5,0.85) {};
\draw (bp1) node[above=0.2cm] {$1$};
%inner-vertices on c-side
\node[draw,shape=circle,fill] (c1) at (0.75,0.425) {};
\draw (c1) node[above=0.2cm] {$2$};
\node[draw,shape=circle,fill] (c2) at (1.5,0.85) {};
\draw (c2) node[below=0.2cm] {$3$};
\node[draw,shape=circle,fill] (c3) at (2.25,1.275) {};
\draw (c3) node[above=0.2cm] {$4$};
%inner-vertices on c'-side
\node[draw,shape=circle,fill] (cp1) at (4,3.5) {};
\draw (cp1) node[right=0.2cm] {$3$};
\node[draw,shape=circle,fill] (cp2) at (5,1.75) {};
\draw (cp2) node[right=0.2cm] {$4$};
%extra vertices
\node[draw,shape=circle,fill] (d) at (0.75,4.5) {};
\draw (d) node[above=0.2cm] {$2$};

\draw[-,line width=0.4mm] (x)--(y)--(z)--(x)--(t)--(y)  (t)--(z) (c2)--(d)--(cp1);
\end{tikzpicture}}}\caption{The two graphs considered in Case~$2$.}
\label{fig:case2}
\end{figure}

\paragraph{Case 3.} Assume that $|V(H)|=14$. Again, let $l_1$, $l_2$, $l_3$ and $l_4$ be the lengths of the four odd-cycles of $H$. Then, we have $l_1+l_2+l_3+l_4=2|E(H)|=32$, and either $(I)$ $l_1=l_2=l_3=7$ and $l_4=11$, or $(II)$ $l_1=l_2=7$ and $l_3=l_4=9$. 

For $(I)$, we define $a$, $b$, $c$, $a'$, $b'$ and $c'$ similarly as in Case~$2$.
It is then implied that $a=a'+2$, $b=b'+2$ and $c=c'+2$. Since $a',b', c'\geq 1$ we have $a,b,c, \geq 3$ and since $a+b+c=11$ we have two possibilities: either $a=b=3$, $c=5$ or $a=3$, $b=c=4$. In the case $(a,b,c)=(3,3,5)$, the vertex of degree~3 of $H$ which is not on the face of size~11 has already five distinct vertices at distance~2, contradicting Claim~\ref{clm:(<=3,<=4)}. In the case $(a,b,c)=(3,4,4)$, $H$ is already of diameter $5$, thus $H$ is the same as $G$. For this case a $C_5$-colouring is given in Figure~\ref{fig:case3a}.

\begin{figure}[!htpb]
  \centering
\scalebox{0.8}{\begin{tikzpicture}[join=bevel,inner sep=0.6mm]
%main vertices
\node[draw,shape=circle,fill] (x) at (0,0) {};
\draw (x) node[left=0.2cm] {$0$};
\node[draw,shape=circle,fill] (y) at (6,0) {};
\draw (y) node[right=0.2cm] {$1$};
\node[draw,shape=circle,fill] (z) at (3,5.2) {};
\draw (z) node[above=0.2cm] {$1$};
\node[draw,shape=circle,fill] (t) at (3,1.7) {};
\draw (t) node[above right=0.1cm] {$4$};
%inner-vertices on a-side
\node[draw,shape=circle,fill] (a1) at (3,4) {};
\draw (a1) node[left=0.2cm] {$2$};
\node[draw,shape=circle,fill] (a2) at (3,2.9) {};
\draw (a2) node[left=0.2cm] {$3$};
%inner-vertices on b-side
\node[draw,shape=circle,fill] (b1) at (0.75,1.3) {};
\draw (b1) node[left=0.2cm] {$4$};
\node[draw,shape=circle,fill] (b2) at (1.5,2.6) {};
\draw (b2) node[left=0.2cm] {$3$};
\node[draw,shape=circle,fill] (b3) at (2.25,3.9) {};
\draw (b3) node[left=0.2cm] {$2$};
%inner-vertices on b'-side
\node[draw,shape=circle,fill] (bp1) at (4.5,0.85) {};
\draw (bp1) node[above=0.2cm] {$0$};
%inner-vertices on c-side
\node[draw,shape=circle,fill] (c1) at (0.75,0.425) {};
\draw (c1) node[above=0.2cm] {$1$};
\node[draw,shape=circle,fill] (c2) at (1.5,0.85) {};
\draw (c2) node[below=0.2cm] {$2$};
\node[draw,shape=circle,fill] (c3) at (2.25,1.275) {};
\draw (c3) node[above=0.2cm] {$3$};
%inner-vertices on c'-side
\node[draw,shape=circle,fill] (cp1) at (4.5,2.6) {};
\draw (cp1) node[right=0.2cm] {$0$};

\draw[-,line width=0.4mm] (x)--(y)--(z)--(x)--(t)--(y)  (t)--(z);
\end{tikzpicture}}
\caption{The first graph considered in Case~$3$, with $(a,b,c)=(3,4,4)$.}
\label{fig:case3a}
\end{figure}

For $(II)$, assuming $c'$ is the length of the common thread of the two cycles of length~$9$, a similar calculation as in Case~$2$ implies that $a=a'$, $b=b'$ and $c'=c+2$. 
The possibilities are (i) $a=1$, $b=3$ and $c=3$, (ii) $a=1$, $b=4$ and $c=2$, (iii) $a=2$, $b=2$ and $c=3$, (iv) $a=2$, $b=3$ and $c=2$, and (v) $a=b=3$ and $c=1$. As in each case, each pair of vertices is on a common cycle of length at most~$10$, and since the girth is at least~$7$, $G$ must be isomorphic to $H$. Then, a mapping to $C_5$ can readily be found (see Figure~\ref{fig:case3b}). This case completes the proof as there is no counterexample to the statement of the theorem.
\end{proof}
\begin{figure}[!htpb]
\centering
\subfigure[$(a,b,c)=(1,3,3)$]{\scalebox{0.8}{\begin{tikzpicture}[join=bevel,inner sep=0.6mm]
%main vertices
\node[draw,shape=circle,fill] (x) at (0,0) {};
\draw (x) node[left=0.2cm] {$1$};
\node[draw,shape=circle,fill] (y) at (6,0) {};
\draw (y) node[right=0.2cm] {$2$};
\node[draw,shape=circle,fill] (z) at (3,5.2) {};
\draw (z) node[above=0.2cm] {$4$};
\node[draw,shape=circle,fill] (t) at (3,1.7) {};
\draw (t) node[above right=0.1cm] {$0$};
%inner-vertices on b-side
\node[draw,shape=circle,fill] (b1) at (1,1.7) {};
\draw (b1) node[left=0.2cm] {$2$};
\node[draw,shape=circle,fill] (b2) at (2,3.45) {};
\draw (b2) node[left=0.2cm] {$3$};
%inner-vertices on b'-side
\node[draw,shape=circle,fill] (bp1) at (4,1.13) {};
\draw (bp1) node[above=0.2cm] {$4$};
\node[draw,shape=circle,fill] (bp2) at (5,0.57) {};
\draw (bp2) node[above=0.2cm] {$3$};
%inner-vertices on c-side
\node[draw,shape=circle,fill] (c1) at (1,0.57) {};
\draw (c1) node[above=0.2cm] {$0$};
\node[draw,shape=circle,fill] (c2) at (2,1.13) {};
\draw (c2) node[above=0.2cm] {$1$};
%inner-vertices on c'-side
\node[draw,shape=circle,fill] (cp1) at (3.6,4.16) {};
\draw (cp1) node[right=0.2cm] {$0$};
\node[draw,shape=circle,fill] (cp2) at (4.2,3.12) {};
\draw (cp2) node[right=0.2cm] {$1$};
\node[draw,shape=circle,fill] (cp3) at (4.8,2.08) {};
\draw (cp3) node[right=0.2cm] {$0$};
\node[draw,shape=circle,fill] (cp4) at (5.4,1.04) {};
\draw (cp4) node[right=0.2cm] {$1$};

\draw[-,line width=0.4mm] (x)--(y)--(z)--(x)--(t)--(y)  (t)--(z);
\end{tikzpicture}}}\qquad
\subfigure[$(a,b,c)=(1,4,2)$]{\scalebox{0.8}{\begin{tikzpicture}[join=bevel,inner sep=0.6mm]
%main vertices
\node[draw,shape=circle,fill] (x) at (0,0) {};
\draw (x) node[left=0.2cm] {$2$};
\node[draw,shape=circle,fill] (y) at (6,0) {};
\draw (y) node[right=0.2cm] {$3$};
\node[draw,shape=circle,fill] (z) at (3,5.2) {};
\draw (z) node[above=0.2cm] {$0$};
\node[draw,shape=circle,fill] (t) at (3,1.7) {};
\draw (t) node[above right=0.1cm] {$4$};
%inner-vertices on b-side
\node[draw,shape=circle,fill] (b1) at (0.75,1.3) {};
\draw (b1) node[left=0.2cm] {$1$};
\node[draw,shape=circle,fill] (b2) at (1.5,2.6) {};
\draw (b2) node[left=0.2cm] {$0$};
\node[draw,shape=circle,fill] (b3) at (2.25,3.9) {};
\draw (b3) node[left=0.2cm] {$1$};
%inner-vertices on b'-side
\node[draw,shape=circle,fill] (bp1) at (3.75,1.275) {};
\draw (bp1) node[above=0.2cm] {$0$};
\node[draw,shape=circle,fill] (bp2) at (4.5,0.85) {};
\draw (bp2) node[above=0.2cm] {$1$};
\node[draw,shape=circle,fill] (bp3) at (5.25,0.425) {};
\draw (bp3) node[above=0.2cm] {$2$};
%inner-vertices on c-side
\node[draw,shape=circle,fill] (c1) at (1.5,0.85) {};
\draw (c1) node[above=0.2cm] {$3$};
%inner-vertices on c'-side
\node[draw,shape=circle,fill] (cp1) at (3.75,3.9) {};
\draw (cp1) node[right=0.2cm] {$4$};
\node[draw,shape=circle,fill] (cp2) at (4.5,2.6) {};
\draw (cp2) node[right=0.2cm] {$0$};
\node[draw,shape=circle,fill] (cp3) at (5.25,1.3) {};
\draw (cp3) node[right=0.2cm] {$4$};

\draw[-,line width=0.4mm] (x)--(y)--(z)--(x)--(t)--(y)  (t)--(z);
\end{tikzpicture}}}\qquad
\subfigure[$(a,b,c)=(2,2,3)$]{\scalebox{0.8}{\begin{tikzpicture}[join=bevel,inner sep=0.6mm]
%main vertices
\node[draw,shape=circle,fill] (x) at (0,0) {};
\draw (x) node[left=0.2cm] {$0$};
\node[draw,shape=circle,fill] (y) at (6,0) {};
\draw (y) node[right=0.2cm] {$3$};
\node[draw,shape=circle,fill] (z) at (3,5.2) {};
\draw (z) node[above=0.2cm] {$3$};
\node[draw,shape=circle,fill] (t) at (3,1.7) {};
\draw (t) node[above right=0.1cm] {$1$};
%inner-vertices on a-side
\node[draw,shape=circle,fill] (a1) at (3,3.4) {};
\draw (a1) node[left=0.2cm] {$2$};
%inner-vertices on a'-side
\node[draw,shape=circle,fill] (ap1) at (3,0) {};
\draw (ap1) node[above=0.2cm] {$4$};
%inner-vertices on b-side
\node[draw,shape=circle,fill] (b1) at (1.5,2.6) {};
\draw (b1) node[left=0.2cm] {$4$};
%inner-vertices on b'-side
\node[draw,shape=circle,fill] (bp1) at (4.5,0.85) {};
\draw (bp1) node[above=0.2cm] {$2$};
%inner-vertices on c-side
\node[draw,shape=circle,fill] (c1) at (1,0.57) {};
\draw (c1) node[above=0.2cm] {$1$};
\node[draw,shape=circle,fill] (c2) at (2,1.13) {};
\draw (c2) node[above=0.2cm] {$0$};
%inner-vertices on c'-side
\node[draw,shape=circle,fill] (cp1) at (3.6,4.16) {};
\draw (cp1) node[right=0.2cm] {$4$};
\node[draw,shape=circle,fill] (cp2) at (4.2,3.12) {};
\draw (cp2) node[right=0.2cm] {$0$};
\node[draw,shape=circle,fill] (cp3) at (4.8,2.08) {};
\draw (cp3) node[right=0.2cm] {$1$};
\node[draw,shape=circle,fill] (cp4) at (5.4,1.04) {};
\draw (cp4) node[right=0.2cm] {$2$};

\draw[-,line width=0.4mm] (x)--(y)--(z)--(x)--(t)--(y)  (t)--(z);
\end{tikzpicture}}}\qquad
\subfigure[$(a,b,c)=(2,3,2)$]{\scalebox{0.8}{\begin{tikzpicture}[join=bevel,inner sep=0.6mm]
%main vertices
\node[draw,shape=circle,fill] (x) at (0,0) {};
\draw (x) node[left=0.2cm] {$0$};
\node[draw,shape=circle,fill] (y) at (6,0) {};
\draw (y) node[right=0.2cm] {$2$};
\node[draw,shape=circle,fill] (z) at (3,5.2) {};
\draw (z) node[above=0.2cm] {$1$};
\node[draw,shape=circle,fill] (t) at (3,1.7) {};
\draw (t) node[above right=0.1cm] {$3$};
%inner-vertices on a-side
\node[draw,shape=circle,fill] (a1) at (3,3.4) {};
\draw (a1) node[left=0.2cm] {$2$};
%inner-vertices on a'-side
\node[draw,shape=circle,fill] (ap1) at (3,0) {};
\draw (ap1) node[above=0.2cm] {$1$};
%inner-vertices on b-side
\node[draw,shape=circle,fill] (b1) at (1,1.7) {};
\draw (b1) node[left=0.2cm] {$1$};
\node[draw,shape=circle,fill] (b2) at (2,3.45) {};
\draw (b2) node[left=0.2cm] {$0$};
%inner-vertices on b'-side
\node[draw,shape=circle,fill] (bp1) at (4,1.13) {};
\draw (bp1) node[above=0.2cm] {$4$};
\node[draw,shape=circle,fill] (bp2) at (5,0.57) {};
\draw (bp2) node[above=0.2cm] {$3$};
%inner-vertices on c-side
\node[draw,shape=circle,fill] (c1) at (1.5,0.85) {};
\draw (c1) node[above=0.2cm] {$4$};
%inner-vertices on c'-side
\node[draw,shape=circle,fill] (cp1) at (3.75,3.9) {};
\draw (cp1) node[right=0.2cm] {$0$};
\node[draw,shape=circle,fill] (cp2) at (4.5,2.6) {};
\draw (cp2) node[right=0.2cm] {$4$};
\node[draw,shape=circle,fill] (cp3) at (5.25,1.3) {};
\draw (cp3) node[right=0.2cm] {$3$};

\draw[-,line width=0.4mm] (x)--(y)--(z)--(x)--(t)--(y)  (t)--(z);
\end{tikzpicture}}}\qquad
\subfigure[$(a,b,c)=(3,3,1)$]{\scalebox{0.8}{\begin{tikzpicture}[join=bevel,inner sep=0.6mm]
%main vertices
\node[draw,shape=circle,fill] (x) at (0,0) {};
\draw (x) node[left=0.2cm] {$4$};
\node[draw,shape=circle,fill] (y) at (6,0) {};
\draw (y) node[right=0.2cm] {$3$};
\node[draw,shape=circle,fill] (z) at (3,5.2) {};
\draw (z) node[above=0.2cm] {$1$};
\node[draw,shape=circle,fill] (t) at (3,1.7) {};
\draw (t) node[above right=0.1cm] {$0$};
%inner-vertices on a-side
\node[draw,shape=circle,fill] (a1) at (3,4) {};
\draw (a1) node[left=0.2cm] {$0$};
\node[draw,shape=circle,fill] (a2) at (3,2.9) {};
\draw (a2) node[left=0.2cm] {$1$};
%inner-vertices on a'-side
\node[draw,shape=circle,fill] (ap1) at (2,0) {};
\draw (ap1) node[above=0.2cm] {$0$};
\node[draw,shape=circle,fill] (ap2) at (4,0) {};
\draw (ap2) node[above=0.2cm] {$4$};
%inner-vertices on b-side
\node[draw,shape=circle,fill] (b1) at (1,1.7) {};
\draw (b1) node[left=0.2cm] {$3$};
\node[draw,shape=circle,fill] (b2) at (2,3.45) {};
\draw (b2) node[left=0.2cm] {$2$};
%inner-vertices on b'-side
\node[draw,shape=circle,fill] (bp1) at (4,1.13) {};
\draw (bp1) node[above=0.2cm] {$1$};
\node[draw,shape=circle,fill] (bp2) at (5,0.57) {};
\draw (bp2) node[above=0.2cm] {$2$};
%inner-vertices on c'-side
\node[draw,shape=circle,fill] (cp1) at (4,3.5) {};
\draw (cp1) node[right=0.2cm] {$0$};
\node[draw,shape=circle,fill] (cp2) at (5,1.75) {};
\draw (cp2) node[right=0.2cm] {$4$};

\draw[-,line width=0.4mm] (x)--(y)--(z)--(x)--(t)--(y)  (t)--(z);
\end{tikzpicture}}}
\caption{Five graphs considered in Case~$3$.}
\label{fig:case3b}
\end{figure}

We now deduce the following consequence of Theorem~\ref{thm:15} and Proposition~\ref{prop:etaIncreasing}, that improves the known lower bounds on $\eta(4,C_5)$ and $\eta(4,C_3)$ (noting that for larger values of $k$, the bound of Corollary~\ref{cor:eta(k,l)>=4k} is already stronger).

\begin{corollary}\label{cor:eta(4,1)}
We have $\eta(4,C_3)\geq\eta(4,C_5)\geq 17$.
\end{corollary}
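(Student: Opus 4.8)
The plan is simply to combine the exact value $\eta(3,2)=15$ furnished by Theorem~\ref{thm:15} with the two monotonicity statements proved earlier in the paper; no new combinatorial work is needed.

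First I would establish the inequality $\eta(4,1)\geq\eta(4,2)$. This is a direct application of Observation~\ref{obs:monotone-columns} with $k=4$ and $l=1$: any graph admitting no homomorphism to $C_3$ also admits no homomorphism to $C_5$ (because $C_5\to C_3$, so composing a hypothetical homomorphism to $C_5$ with that map would produce a homomorphism to $C_3$). Consequently every graph counted towards $\eta(4,1)$ is also counted towards $\eta(4,2)$, giving $\eta(4,2)\leq\eta(4,1)$.

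Next I would derive the bound $\eta(4,2)\geq 17$. For this I would invoke Proposition~\ref{prop:etaIncreasing}, which asserts that $\eta(k+1,l)\geq\eta(k,l)+2$ whenever $k\geq l$. Taking $k=3$ and $l=2$ (so that the hypothesis $k\geq l$ holds, since $3\geq 2$) yields $\eta(4,2)\geq\eta(3,2)+2$, and substituting the value $\eta(3,2)=15$ from Theorem~\ref{thm:15} gives $\eta(4,2)\geq 17$.

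Chaining the two steps produces the asserted inequality $\eta(4,1)\geq\eta(4,2)\geq 17$. I expect no genuine obstacle here: the whole substance of the corollary is carried by Theorem~\ref{thm:15}, whose proof is the difficult part of this section, whereas the two monotonicity facts are elementary. The only point meriting attention is verifying that the hypothesis $k\geq l$ of Proposition~\ref{prop:etaIncreasing} is met in the application, which it is.
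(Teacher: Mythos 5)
Your proposal is correct and matches the paper's intended argument exactly: the paper states this corollary as a direct consequence of Theorem~\ref{thm:15} and Proposition~\ref{prop:etaIncreasing}, with Observation~\ref{obs:monotone-columns} supplying the inequality $\eta(4,1)\geq\eta(4,2)$, precisely as you do. Your verification that the hypothesis $k\geq l$ holds when applying Proposition~\ref{prop:etaIncreasing} with $(k,l)=(3,2)$ is the right point to check, and it goes through.
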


\section{Concluding remarks}\label{sec:conclu}

In this work, we have started investigating the smallest order of a $C_{2\ell +1}$-critical graph of odd-girth $2k+1$. We have determined a number of previously unknown values, in particular we showed that a smallest $C_5$-critical graph of odd-girth~7 is of order~15. In contrast to the result of Chv\'atal on the uniqueness of the smallest triangle-free 4-chromatic graph~\cite{C74}, we have found more than one such graph: Gordon Royle showed computationally, that there are exactly eleven such graphs (private communication, 2016) and Figure~\ref{fig:2graphsOrder15} shows three of them.

Regarding Table~\ref{table}, we do not know the growth rate in each row of the table. Perhaps it is quadratic; that would be to say that for a fixed $\ell$, $\eta(k,C_{2\ell+1})=\Theta(k^2)$. This is indeed true for $\ell=1$, as proved in \cite{N99}.

Our last remark is about Theorem~\ref{thm:columns}. We think that for any given $k$, Theorem~\ref{thm:columns} covers all values of $k$ for which $\eta(k,C_{2\ell+1})=4k$.

A similar problem of interest is to ask for the order of a smallest $C_{2\ell +1}$-critical graph of girth $g$. This question has not been studied much. In this formulation, the Gr\"otzsch graph is the smallest $C_3$-critical graph of girth~4. When girth~5 is considered, then again a computational work of Gordon Royle showed that 21 vertices are needed and again the answer is not unique~\cite{MO}. See~\cite{EG19} for further studies.

\end{document}